\newtheorem{lem}{Lemma}[section]
\newtheorem{them}[lem]{Theorem}
\newtheorem{Def}[lem]{Definition}
\newtheorem{prop}[lem]{Proposition}
\newtheorem{cor}[lem]{Corollary}
\title{\textbf{The Stability of Pointwise Hyperbolic Systems}}
\author{ Haiye Guo, Yunhua Zhou 
	\\
\small\textit{College of Mathematics and Statistics, Chongqing University, Chongqing 401331, China}}
\date{}
\begin{document}
	\bibliographystyle{plain}
	\maketitle
	\renewcommand{\abstractname}{}
	\begin{abstract}
		\noindent\textbf{Abstract:}	The stability of the system is an important part of the research on differential dynamical systems. This paper considers a pointwise hyperbolic system defined on a connected open subset N of a compact smooth Riemannian manifold M. The hyperbolicity may weaken when approaching the boundary of the open set. By analogy with the stability of hyperbolic systems, this paper constructs the expansive property and the shadowing lemma on the pointwise pseudo orbits and thus obtains the stability of pointwise hyperbolic systems. 

		\noindent\textbf{Keywords:} pointwise hyperbolic; pointwise expandability; shadowing lemma; stability
	\end{abstract}
	
\section{Introduction}
Hyperbolic systems are an important part of the study of differential dynamical systems, and so far hyperbolic systems have developed rich theoretical results. For the introduction to hyperbolic systems, see \cite{Sun,Wun,hyperbolic1,hyperbolic2,hyperbolic3} for details. The pointwise hyperbolic system, as a new type of system, is also an important object in the study of differential dynamical systems.
 Based on the uniform compression and expansion, the hyperbolic system is also called uniform hyperbolic system. For pointwise hyperbolic systems, there are compression and expansion rates at each point that depend on the point.
An early pointwise hyperbolic system is the Katok map \cite{Katok}, defined by Katok in 1979. There is also a class of dissipative pointwise hyperbolic systems “Almost Anosov differential homeomorphisms” proposed by Hu and Young in 1995 in \cite{Almost}.
In 2020, Zhou, Chen and Hu studied the pointwise hyperbolic system \cite{SRB} defined on an invariant open set $N$ in a compact smooth Riemannian manifold. Since $N$ is an open set, the hyperbolicity usually becomes weaker near the boundary of $N$, in which case the hyperbolicity is not uniform. \cite{SRB} constructs locally stable and unstable manifolds on the orbits of the points by using the graph transform under certain assumptions and the size of the unstable manifolds depends on the distances of the points to the boundary.
In 2024, in literature \cite{Wu}, Zhou and Wu  continued to follow the similar assumptions, constructed locally stable and unstable manifolds on recurrent-pointwise-pseudo-orbits, and constructed Markov splitting and symbolic models by using the corresponding shadowing lemma. Some of the conclusions and lemmas in the paper are derived from \cite{Wu} or are generalizations of the corresponding ones in \cite{Wu}. Based on these studies of pointwise hyperbolic systems, we hope to generalize some important results in hyperbolic systems to pointwise hyperbolic systems, such as the stability of hyperbolic systems.

The stability theory is an important part of the research on dynamical systems. In 1967, Anosov proved that the Anosov differential homomorphisms on compact Riemannian manifolds with full space as hyperbolic set are $C^1$ structurally stable \cite{Anosov}.
In 1969, Walters proved that the Anosov differential homomorphisms are topologically stable \cite{Peter2}. In 1978, Walters further proved in the literature \cite{Peter} that the expandability of the homomorphisms under the compact metric space with the pseudo orbit shadowing property ensures the topological stability, and in particular, if the perturbation g is also expansive, then the conjugate map can be a homomorphism.
Many scholars have also done a great deal of important work on the study of the stability of dynamical systems. For related details, see \cite{stability1,stability2,stability3,stability4,stability5,stability6,stability7,stability8}.
It is natural for us to consider the stability of pointwise hyperbolic systems, which is of great importance in enriching the study of differential dynamical systems. 

By analogy with the stability of hyperbolic systems, this paper generalizes the expansive property and the shadowing property in hyperbolic systems to pointwise hyperbolic systems. For the general content of pseudo orbits and the shadowing lemma, see \cite{gz1, gz2}.The pseudo orbit shadowing property of a system is helpful for understanding the behavior of the system when it is subjected to small perturbations. Therefore, the definition of pseudo orbits in pointwise hyperbolic systems is a key point in proving the stability. In fact, the literature \cite{Wu} constructs the directed graph by using a countable subset $V$ of the open set $O$ as the vertices, and already defines recurrent-pointwise-pseudo-orbits with the corresponding shadowing lemma. For the need of subsequent proof of stability, we have made some modifications to the definition of recurrent-pointwise-pseudo-orbits, and redefined the pointwise pseudo orbits. 
We call $\{x_n\}_{n\in \mathbb{Z}}\subset N$ a $\varepsilon$-pointwise pseudo orbit of $f$ if the $\{x_n\}_{n\in \mathbb{Z}}\subset N$ satisfies:

(1)\quad $d(f(x_n),x_{n+1})<\delta^u(x_n)Q(x_n)^2Q(f(x_n))$ 

(2)\quad$d(f^{-1}(x_{n+1}),x_n)<\delta^s(x_{n+1})Q(x_{n+1})^2Q(f^{-1}(x_{n+1}))$

(3)\quad$\prod_{i=0}^\infty\|Df|_{E^s(x_{-i})}\|=0,
\prod_{i=0}^\infty\|Df^{-1}|_{E^u(x_i)}\|=0.$\\
At the same time, we also made some modifications to the initial Assumption US in \cite{Wu}, so that Proposition 5.2 in \cite{Wu} still holds for the re-defined pointwise pseudo orbits, and  the corresponding shadowing lemma exists. Then, we defined an expansive property similar to that in hyperbolic systems and demonstrated that under certain assumptions the pointwise hyperbolic system is "pointwise expansive".
On this basis, following the idea of Walters in \cite{Peter}, we obtain the main conclusion of this paper, that is, the stability of the pointwise hyperbolic system $f:M\to M$ satisfying Assumptions U, S, and R under sufficiently small perturbations:

For any sufficiently small $\varepsilon>0$, if the diffeomorphism $g:M\to M$ satisfies\\
(1) $f|_{M\setminus N}=g|_{M\setminus N}$;\\
(2) $\prod_{i = 0}^{\infty}m(Df|_{E^u(g^ix)})=\infty$, $\prod_{i = 0}^{\infty}m(Df^{-1}|_{E^s(g^{-i}x)})=\infty$ for all $x\in N$;\\
(3) $\max\{d(f(x),g(x)),\|D_xf - D_xg\|\}<\xi(x)\delta^u(x)Q(x)^2Q(f(x))$ for all $x\in N$.\\
Then there exists a continuous surjective map $h:M\to M$ such that $h\circ g = f\circ h$. In particular, $h|_{M\setminus N}=id$ and for all $x\in N$, $d(h(x),x)\leq Q(x)$. In this case, $f$ is called "semi - pointwise quasi - stable".
In fact, based on the preservation of pointwise hyperbolicity under perturbations under Assumption $K$, it can be further shown that when $f$ satisfies Assumption $K$, if there exists a sequence of functions $\{r_g^n(x)\}_{n\in\mathbb{Z}}$ such that "for all $n$, when $d(g^nx,g^ny)\leq r_g^n(x)$, then $x = y$", then $h$ is a homeomorphism. In this case, $f$ is called "pointwise quasi - stable". 

 \section{Basic definitions and assumptions}
Let  $ M $  be a  $ C^\infty $  compact and connected Riemannian manifold, and  $ N $  be a connected open subset of  $ M $ . Without loss of generality, assume that the diameter of  $ M $  is less than 1. 
Let  $ f: M \to M $  be a  $ C^{1+\alpha} $  diffeomorphism such that  $ f(N) = N $. Denote by $ \|D_x f \| $  and  $ m(D_x f) $ the norm and the minimum norm of  $ D_x f $, respectively.
\begin{Def}
	$ f \in \text{Diff}^r (M) $  ( $ r \geq 1 $ ) is  called pointwise hyperbolic on an invariant set  $ N \subset M $  if there exist functions  $ \tau^s, \tau^u: N \to \mathbb{R}_{+} $  and an invariant splitting  $ T_{N}M = E^{s} \oplus E^{u} $ , such that for any  $ x \in N $, we have  $ \tau^s(x) < 1 $,  $\tau^u(x) > 1$, and
	$$
	\|D_{x}f|_{E^{s}(x)}\| \leq \tau^{s}(x) < 1,
	$$ 
	$$1 < \tau^{u}(x) \leq m(D_{x}f|_{E^{u}(x)}).
	$$ 
	where $ E^{s}(x) $  and  $ E^{u}(x) $  are the stable and unstable subspaces, respectively.
	
\end{Def}

For pointwise hyperbolic systems, hyperbolicity may weaken near the boundary of  the open set  $N$ , that is, as  $x \to \partial N$, it is possible that  $\|D_x f|_{E^s(x)}\| \to 1$ or $m(D_x f|_{E^u(x)}) \to 1$ . Since $N$ is not compact, in this case, the pointwise hyperbolic system no longer has a uniform contraction and expansion rate on  $N$ .
\\
\textbf{Assumption U:}
(i) For any $x\in N$, it holds that $$  \prod_{i=0}^\infty m(Df|_{E^u(f^ix)})=\infty , \quad \prod_{i=0}^\infty m(Df|_{E^u(f^{-i}x)})=\infty.$$

(ii) There exist $r_0^u,\beta^u>0, \gamma^u>\max\{1,\frac{\beta^u}{\alpha}\}$ and $C^u>1 $ such that for any $ x\in N$ satisfying $d(x,\partial N)\leq r_0^u$, we have:
$$m(Df|_{E^u(x)})-1\geq C^u\max\bigg\{d(x,\partial N)^{\beta^u},\bigg(\frac{d(f(x),\partial N)}{d(x,\partial N)}\bigg)^{\gamma^u}-1\bigg\}$$\\
\textbf{Assumption S:}
(i)For any $x\in N$, it holds that $$  \prod_{i=0}^\infty m(Df^{-1}|_{E^s(f^ix)})=\infty ,  \quad\prod_{i=0}^\infty m(Df^{-1}|_{E^s(f^{-i}x)})=\infty.$$

(ii) There exist $r_0^s,\beta^s>0, \gamma^s>\max\{1,\frac{\beta^s}{\alpha}\}$ and $C^s>1 $ such that  for all $ x\in N$ satisfying $d(x,\partial N)\leq r_0^s$, we have:
\[m(Df^{-1}|_{E^s(x)})-1\geq C^s\max\bigg\{d(x ,\partial N)^{\beta^s},\bigg(\frac{d(f^{-1}(x), \partial N)}{d(x,\partial N)}\bigg)^{\gamma^s}-1\bigg\}\]
\textbf{Assumption R:} $\alpha>\frac{\beta}{\gamma}$, where $\gamma=\min\{\gamma^s ,\gamma^u\}>1,\beta=\max\{\beta^s ,\beta^u\}$.\\

The origin of the above assumptions can be referred to \cite{Wu}, \cite{SRB}.The assumptions  $U(i)$  and  $S(i)$  are stronger than the assumption  that “$E^{s}(x)$ , $E^{u}(x)$  are continuous”in \cite{Wu}, and  ensure that local stable and unstable manifolds can still be constructed on “pointwise pseudo orbits.” The assumptions  $U(ii)$  and  $S(ii)$  only make a slight modification to the exponents in the original assumptions $m(Df|_{E^u(x)})^{\kappa^u} - 1$  and  $m(Df^{-1}|_{E^s(x)})^{\kappa^s} - 1$, weaker than the original ones. In the literature \cite{Wu},  $\kappa^u$  and  $\kappa^s$  are only taken within  $(0,1)$ , but the condition  $\kappa^u, \kappa^s \in (0,1)$  is only used in Chapter 6 in \cite{Wu}, which involves Markov partitions and symbolic dynamics. Therefore, the conclusions in the first five chapters of \cite{Wu} still hold when  $\kappa^u = \kappa^s = 1$ . The assumptions $U(ii)$  and  $S(ii)$  ensure that the expansion rate along the unstable direction is greater than the rate of points move away from the boundary when points approach the boundary.
Assumption  $R$  is about the regularity of  $f$ . Thus, under the above assumptions, some conclusions in the literature \cite{Wu} still hold, we can still use the graph transform to construct local stable and unstable manifolds.

We now introduce two functions $\varepsilon(x)$ and $Q(x)$ that will be frequently used later, so that the discussion near the boundary depends on the distance of points to the boundary when  we perturb $f$ in the interior of $N$ .
$$ \varepsilon(x)=\varepsilon \min\{r_0^\beta, d(x, \partial N)^\beta\}$$
$$Q(x)=\varepsilon^\frac{2}{\alpha-\delta} \min\left\{r_0^\gamma,d(x,\partial N)^\gamma \right\}.$$
where $r_0=\min\{r_0^u,r_0^s\},$ $\delta$ is a fixed parameter satisfying $0<\delta<\min\{1,\alpha-\frac{\beta}{\gamma}\}$.In particular, when $\varepsilon$ is sufficiently small, it is easy to prove that\[Q(x)<d(x,\partial N)^\gamma<d(x,\partial N)\] \[Q(x)^{\alpha-\delta}=\varepsilon^2\min\bigg \{ r_0^   {(\alpha-\delta) \gamma },d(x,\partial N)^{(\alpha-\delta)\gamma}\bigg \} \leq \varepsilon \varepsilon(x).\]
\begin{lem}\label{扰动依据}
	For sufficiently small $\varepsilon>0$,we have \[\varepsilon\varepsilon(x)< \min\{m(Df|_{E^u(x)})-1,m(Df^{-1}|_{E^s(x)})-1,1-\|Df|_{E^{s}(x)}\|\} \]
\end{lem}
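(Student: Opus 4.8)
The plan is to reduce the inequality to one scaling estimate and then split $N$ according to the distance to the boundary. Since $\varepsilon\varepsilon(x)=\varepsilon^2\min\{r_0^\beta,d(x,\partial N)^\beta\}$, it suffices to produce a constant $c>0$, independent of $\varepsilon$, with
\[
G(x):=\min\{m(D_xf|_{E^u(x)})-1,\ m(D_xf^{-1}|_{E^s(x)})-1,\ 1-\|D_xf|_{E^s(x)}\|\}\ \ge\ c\,\min\{r_0^\beta,d(x,\partial N)^\beta\}
\]
for every $x\in N$, and then to take any $\varepsilon$ with $\varepsilon^2<c$. I may assume $N\ne M$ (otherwise $N$ is compact, pointwise hyperbolicity is uniform, $\varepsilon(x)\equiv\varepsilon r_0^\beta$, and the claim is immediate), so $\partial N\ne\emptyset$ and $0<d(x,\partial N)<1$ on $N$, as $\mathrm{diam}(M)<1$. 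Since $f$ is $C^1$ on the compact $M$, set $L:=\max\{1,\ \sup_{z\in M}\|D_zf\|,\ \sup_{z\in M}\|D_zf^{-1}\|\}<\infty$; then $f,f^{-1}$ are $L$-Lipschitz, and since $f$ is a homeomorphism with $f(N)=N$ we have $f(\partial N)=\partial N$, whence $L^{-1}d(x,\partial N)\le d(f(x),\partial N)\le L\,d(x,\partial N)$ for $x\in N$. Also $G$ is continuous and strictly positive on $N$: continuity follows from the continuity of $E^s,E^u$ (Assumptions U(i), S(i)) and of $Df$, and positivity from pointwise hyperbolicity, using that $m(D_xf^{-1}|_{E^s(x)})=\|D_{f^{-1}x}f|_{E^s(f^{-1}x)}\|^{-1}>1$ and $\|D_xf|_{E^s(x)}\|\le\tau^s(x)<1$.

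Next I would fix the threshold $\rho:=\min\{r_0^u,\ r_0^s/L\}>0$ (so $\rho\le r_0$) and split $N$. On the compact set $K_\rho:=\{x\in N:d(x,\partial N)\ge\rho\}$ the continuous positive function $G$ attains a positive minimum $c_0$, and since $\min\{r_0^\beta,d(x,\partial N)^\beta\}\le r_0^\beta$, there $G(x)\ge(c_0/r_0^\beta)\min\{r_0^\beta,d(x,\partial N)^\beta\}$. On $\{x\in N:d(x,\partial N)<\rho\}$ one has $\min\{r_0^\beta,d(x,\partial N)^\beta\}=d(x,\partial N)^\beta$, and I would bound each term of $G(x)$ below by a constant multiple of $d(x,\partial N)^\beta$. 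For the first two this is immediate from Assumptions U(ii), S(ii) at $x$ (valid since $d(x,\partial N)<\rho\le r_0^u,r_0^s$) together with the elementary comparison $d(x,\partial N)^\beta\le d(x,\partial N)^{\beta^u},d(x,\partial N)^{\beta^s}$ (since $d(x,\partial N)<1$ and $\beta=\max\{\beta^s,\beta^u\}$): they give $m(D_xf|_{E^u(x)})-1\ge C^u d(x,\partial N)^\beta$ and $m(D_xf^{-1}|_{E^s(x)})-1\ge C^s d(x,\partial N)^\beta$. For the third term I would use the identity $\|D_xf|_{E^s(x)}\|=m(D_{f(x)}f^{-1}|_{E^s(f(x))})^{-1}$ (the two operators being mutually inverse), so that
\[
1-\|D_xf|_{E^s(x)}\|=\frac{m(D_{f(x)}f^{-1}|_{E^s(f(x))})-1}{m(D_{f(x)}f^{-1}|_{E^s(f(x))})}\ \ge\ L^{-1}\big(m(D_{f(x)}f^{-1}|_{E^s(f(x))})-1\big),
\]
and then apply Assumption S(ii) at $f(x)$ — legitimate since $d(f(x),\partial N)\le L\,d(x,\partial N)<L\rho\le r_0^s$ — followed by $d(f(x),\partial N)\ge L^{-1}d(x,\partial N)$, to get $1-\|D_xf|_{E^s(x)}\|\ge C^sL^{-1-\beta^s}d(x,\partial N)^\beta$. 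Hence $G(x)\ge c_1 d(x,\partial N)^\beta$ on this region with $c_1:=\min\{C^u,\ C^sL^{-1-\beta^s}\}$, and $c:=\min\{c_0/r_0^\beta,\ c_1\}$ finishes the argument.

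The only step I expect to require care is the third quantity, $1-\|D_xf|_{E^s(x)}\|$: Assumption S(ii) controls $m(Df^{-1}|_{E^s})$, which equals $\|Df|_{E^s}\|^{-1}$ only after a one-step base-point shift, so one is forced to pass to $f(x)$ and to invoke the comparability $d(f(x),\partial N)\asymp d(x,\partial N)$ (which rests on $f(\partial N)=\partial N$), and one must choose the threshold $\rho$ small enough ($\rho\le r_0^s/L$) that $f(x)$ still lies where Assumption S(ii) applies. The compact-region estimate, the continuity and positivity of $G$, and the exponent comparisons are routine.
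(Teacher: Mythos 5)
Your proof is correct, and its skeleton is the same as the paper's: a compactness argument away from the boundary, Assumptions U(ii)/S(ii) near it, and the identity $\|D_xf|_{E^s(x)}\|=m(D_{f(x)}f^{-1}|_{E^s(f(x))})^{-1}$ to convert the third quantity into something S(ii) controls. There are two genuine differences in how you execute the delicate third term. First, you shrink the splitting threshold to $\rho=\min\{r_0^u,\,r_0^s/L\}$ so that $d(x,\partial N)<\rho$ forces $d(f(x),\partial N)<r_0^s$; the paper instead keeps the threshold at $r_0$ and is therefore obliged to split the near-boundary case further according to whether $d(f(x),\partial N)\geq r_0$ (handled by a second compactness argument via $f(x)\in A$) or $d(f(x),\partial N)<r_0$. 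Your choice eliminates that sub-case at the cost of a slightly smaller compact region. Second, to compare $d(f(x),\partial N)$ with $d(x,\partial N)$ you use only the bi-Lipschitz property of $f$ together with $f(\partial N)=\partial N$, giving $d(f(x),\partial N)\geq L^{-1}d(x,\partial N)$; the paper instead extracts the bound $d(x,\partial N)/d(f(x),\partial N)\leq b:=((C^f-1)/C^s+1)^{1/\gamma^s}$ from the ratio clause of Assumption S(ii). Your route is more elementary and does not consume the $\gamma^s$-part of the hypothesis, while the paper's constant $b$ is the one it reuses elsewhere. Both arguments also tacitly use continuity and positivity of the three quantities on the compact region, which you state explicitly. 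Finally, your packaging — a single $\varepsilon$-free lower bound $G(x)\geq c\,\min\{r_0^\beta,d(x,\partial N)^\beta\}$ followed by $\varepsilon^2<c$ — is exactly what the paper's Corollary 2.3 records, so your version in effect proves the lemma and its corollary in one pass.
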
 

\begin{proof}
	The discussion can be divided into the following cases:\\
	\textbf{Case 1:} If $d(x,\partial N)\geq r_0$, then $\varepsilon(x)=\varepsilon r_0^\beta$. Notice that $A=\{x\in N|d(x,\partial N)\geq r_0\}$ is compact, let$$a=\min_{x\in A}\{m(Df|_{E^u(x)})-1,m(Df^{-1}|_{E^s(x)})-1,1-\|Df|_{E^{s}(x)}\|\}.$$ When $\varepsilon<\sqrt{\frac{a}{r_0^\beta}}$, for all $ x\in N$ satisfying $d(x,\partial N)\geq r_0$, we have \[\varepsilon\varepsilon(x)< \min\{m(Df|_{E^u(x)})-1,m(Df^{-1}|_{E^s(x)})-1,1-\|Df|_{E^{s}(x)}\|\} .\]
	\textbf{Case 2:} If $d(x,\partial N)< r_0$, then $\varepsilon(x)=\varepsilon d(x,\partial N)^\beta$.Under assumption U(ii), S(ii), when $\varepsilon<1$, then$$\varepsilon\varepsilon(x)< \min \{m(Df|_{E^u(x)})-1,m(Df^{-1}|_{E^s(x)})-1\}.$$ Next we only need to show that when  $\varepsilon$  is sufficiently small,  $\varepsilon \varepsilon(x) < 1 - \|Df|_{E^{s}(x)}\|$  holds for all  $x \in N$ .
	We note that $$\|Df|_{E^{s}(x)}\|=(m(Df^{-1}|_{E^s(f(x))}))^{-1}.$$ Now we consider the following two cases:
	
	(1)When  $ d(f(x), \partial N) \geq r_0 $,  $ f(x) \in A $, then $$ a + 1 \leq m(Df^{-1}|_{E^s(f(x))}) = \|Df|_{E^{s}(x)}\|^{-1}.$$
	When  $ \varepsilon < \left(\frac{1 - (a + 1)^{-1}}{r_0^\beta}\right)^{\frac{1}{2}} $ , we have  $ \varepsilon \varepsilon(x) \leq \varepsilon^2 r_0^\beta < 1 - (a + 1)^{-1} \leq 1 - \|Df|_{E^{s}(x)}\| $ 
	
	(2)When $d(f(x),\partial N)< r_0$ ,by assumption S(ii), we have
	\begin{equation*}
		\begin{aligned}
			C^sd(f(x),\partial N)^{\beta^s}
			&\leq m(Df^{-1}|_{E^s(f(x))})-1\\
			&=\|Df|_{E^{s}(x)}\|^{-1}-1\\
			&=\|Df|_{E^{s}(x)}\|^{-1}(1-\|Df|_{E^{s}(x)}\|).
		\end{aligned}
	\end{equation*}
	Let $C^f=\max_{x\in M}\{\max\{\lVert D_xf\lVert,\lVert D_xf^{-1}\rVert\}\}$. Based on S(ii), let $b=(\frac{C^f-1}{C^s}+1)^{\frac{1}{\gamma^s}}\geq\frac{d(x,\partial N)}{d(f(x),\partial N)}$,
	take $\varepsilon<\min\{C^s,\frac{1}{b^\beta C^f}\}$,then for all $x\in N$,we have
	\begin{equation*}
		\begin{aligned}
			\varepsilon\varepsilon(x)
			&=\varepsilon^2d(x,\partial N)^\beta\\
			&\leq \varepsilon C^sb^\beta d(f(x),\partial N)^\beta\\
			&\leq\varepsilon C^sb^\beta d(f(x),\partial N)^{\beta^s}\\
			&\leq\varepsilon b^\beta\|Df|_{E^{s}(x)}\|^{-1}(1-\|Df|_{E^{s}(x)}\|)\\
			&\leq \varepsilon b^\beta m(Df^{-1}|_{E^s(f(x))})(1-\|Df|_{E^{s}(x)}\|)\\
			&\leq\varepsilon b^\beta C^f(1-\|Df|_{E^{s}(x)}\|)\\
			&\leq1-\|Df|_{E^{s}(x)}\rVert
		\end{aligned}
	\end{equation*}
	In summary, when $\varepsilon<\min\{\sqrt{\frac{a}{r_0^\beta}},1,\sqrt{\frac{1-(a+1)^{-1}}{r_0^\beta}},C^s,\frac{1}{b^\beta C^f}\}$, both cases 1 and 2 hold, that is to say,\[\varepsilon\varepsilon(x)\leq \min\{m(Df|_{E^u(x)})-1,m(Df^{-1}|_{E^s(x)})-1,1-\|Df|_{E^{s}(x)}\|\} .\]
\end{proof}
\begin{cor}
	There exists a constant $C_0$ such that for sufficiently small  $\varepsilon>0$,we have$$C_0\varepsilon(x)< \min\{m(Df|_{E^u(x)})-1,m(Df^{-1}|_{E^s(x)})-1,1-\|Df|_{E^{s}(x)}\|\} .$$
\end{cor}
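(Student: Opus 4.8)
The plan is to sharpen the estimates already present in the proof of Lemma~\ref{扰动依据}, this time keeping one ``spare'' power of $\varepsilon$. Write $\phi(x):=\min\{r_0^\beta,\,d(x,\partial N)^\beta\}$, so that $\varepsilon(x)=\varepsilon\,\phi(x)$ with $\phi$ independent of $\varepsilon$. I would first prove that there is a constant $K>0$, depending only on $f$ and on the structural constants $r_0,\beta,C^u,C^s,C^f$ (and on $\gamma^s$, through $b$), but not on $\varepsilon$ or on $x$, such that for every $x\in N$,
\[
\phi(x)\ \le\ K\,\min\bigl\{m(Df|_{E^u(x)})-1,\ m(Df^{-1}|_{E^s(x)})-1,\ 1-\|Df|_{E^{s}(x)}\|\bigr\}.
\]
Granting this, the corollary is immediate: for any prescribed constant $C_0>0$, take $\varepsilon$ small enough that $\varepsilon<1/(C_0K)$; then $C_0\varepsilon(x)=C_0\varepsilon\,\phi(x)\le C_0K\varepsilon\cdot\min\{\cdots\}<\min\{\cdots\}$. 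In particular $C_0$ in the statement may be taken to be any positive number, the smallness threshold for $\varepsilon$ then depending on $C_0$.

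To obtain the displayed inequality I would run the same case division as in Lemma~\ref{扰动依据}. If $d(x,\partial N)\ge r_0$, the set $A=\{x\in N:d(x,\partial N)\ge r_0\}$ is compact, the number $a=\min_{x\in A}\min\{m(Df|_{E^u(x)})-1,\,m(Df^{-1}|_{E^s(x)})-1,\,1-\|Df|_{E^{s}(x)}\|\}$ is positive (each entry is positive and continuous on $N$ under $U(i),S(i)$), and $\phi(x)=r_0^\beta$ there, so $K\ge r_0^\beta/a$ covers this range. If $d(x,\partial N)<r_0$ (hence $\le r_0^u,r_0^s$), then $\phi(x)=d(x,\partial N)^\beta$, and Assumptions $U(ii),S(ii)$ give $m(Df|_{E^u(x)})-1\ge C^u d(x,\partial N)^{\beta^u}\ge C^u\phi(x)$ and $m(Df^{-1}|_{E^s(x)})-1\ge C^s\phi(x)$ (using $d(x,\partial N)<1$ and $\beta\ge\beta^u,\beta^s$), so $1/\min\{C^u,C^s\}$ handles the first two entries. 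For the entry $1-\|Df|_{E^{s}(x)}\|$ I would use $\|Df|_{E^{s}(x)}\|^{-1}=m(Df^{-1}|_{E^s(f(x))})$ and split once more: if $d(f(x),\partial N)\ge r_0$ then $f(x)\in A$ gives $1-\|Df|_{E^{s}(x)}\|\ge 1-(a+1)^{-1}$ while $\phi(x)\le r_0^\beta$, so $K\ge r_0^\beta/(1-(a+1)^{-1})$ works; if $d(f(x),\partial N)<r_0$ then $S(ii)$ at $f(x)$ yields $C^s d(f(x),\partial N)^{\beta^s}\le\|Df|_{E^{s}(x)}\|^{-1}(1-\|Df|_{E^{s}(x)}\|)\le C^f(1-\|Df|_{E^{s}(x)}\|)$, and combining this with the a priori ratio bound $d(x,\partial N)\le b\,d(f(x),\partial N)$ (with $b=((C^f-1)/C^s+1)^{1/\gamma^s}$, also from $S(ii)$), together with $\beta\ge\beta^s$ and $d(f(x),\partial N)<1$, gives $\phi(x)=d(x,\partial N)^\beta\le b^\beta d(f(x),\partial N)^{\beta^s}\le(b^\beta C^f/C^s)(1-\|Df|_{E^{s}(x)}\|)$. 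Taking $K$ to be the maximum of the finitely many constants produced above proves the estimate.

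The one point that genuinely needs care — and the reason one cannot quote Lemma~\ref{扰动依据} verbatim — is that there the left side is $\varepsilon\varepsilon(x)=\varepsilon^2\phi(x)$ and the proof is free to consume \emph{both} factors of $\varepsilon$ (for instance the step $\varepsilon^2 d(x,\partial N)^\beta\le\varepsilon C^s b^\beta d(f(x),\partial N)^\beta$ spends one factor on the constraint $\varepsilon\le C^s$), whereas here $\phi(x)$ itself must be bounded by a constant multiple of the minimum with no power of $\varepsilon$ to spare. That is exactly why, in the last sub-case, I would estimate $d(f(x),\partial N)^{\beta^s}$ directly against $\|Df|_{E^{s}(x)}\|^{-1}(1-\|Df|_{E^{s}(x)}\|)$ and only afterwards bring in $d(x,\partial N)\le b\,d(f(x),\partial N)$, rather than factoring out $C^s$ prematurely; everything else is routine bookkeeping.
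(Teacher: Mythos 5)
Your proof is correct and is essentially what the paper intends: the paper's entire proof of this corollary is the one-line remark that it follows from the proof of Lemma~\ref{扰动依据}, and your factorization $\varepsilon(x)=\varepsilon\,\phi(x)$ together with the $\varepsilon$-free bound $\phi(x)\le K\min\{\cdots\}$ (obtained by rerunning the same case division) is exactly the right way to extract it. Your closing observation --- that the lemma's argument spends both powers of $\varepsilon$, so the sub-case $d(f(x),\partial N)<r_0$ must be re-estimated directly against $\|Df|_{E^{s}(x)}\|^{-1}(1-\|Df|_{E^{s}(x)}\|)$ rather than quoted verbatim --- is the one genuinely delicate point, and you handle it correctly.
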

This can be  directly obtained from the proof  of Lemma \ref{扰动依据}.

\section{Admissible manifolds and graph transform}
For all $ x\in N$, let $B^{s}(Q(x))$ and $B^{u}(Q(x))$  be the closed balls centered at the origin with radius $Q(x)$ on $E^{s}(x)$ and $E^{u}(x)$ respectively. Denote $S_{x}=B^{u}(Q(x))\oplus B^{s}(Q(x))$, which is a $Q(x)$- square centered at $O_{x}$.
Recall the fact about the exponential map. Since $M$ is compact, there exists $\rho > 0$ such that for all $ x\in M$ and $v\in T_xM(\rho)$, $d(\exp_x(v),x)=\|v\|$. When $\varepsilon$ is small enough so that $Q(x)<\frac{\rho}{\sqrt{2}C^f}$, $f\circ \exp_x$ maps $S_x$ into $B(f(x),\sqrt{2}C^fQ(x))$, where $C^f=\max_{x\in M}\{\max\{\lVert D_xf\rVert,\lVert D_xf^{-1}\rVert\}\}$.

In this case, for $y\in N$ satisfying $d(f(x),y)< \rho-\sqrt{2}C^fQ(x)$,
\[F_{xy}=\exp_y^{-1}\circ f\circ \exp_x:S_x\to T_yM \]
is well defined and is a $C^{1 + \alpha}$ diffeomorphism onto its image.
$F_{xy}$ is equivalent to a map defined between two Euclidean spaces, and its differentiability can be considered:
\[D_0F_{xy}=
\begin{pmatrix}
	D_{xy}^{uu}&D_{xy}^{us}  \\
	D_{xy}^{su}&D_{xy}^{ss}  \\	
\end{pmatrix}\]
where $D_{xy}^{us}=P_y^u\circ D_{f(x)}\exp_y^{-1}\circ D_xf|_{E^s(x)}:B_{x}^{s}(Q(x))\to E^u(y)$, and $P_y^u$ is the projection in the direction of $E^u(x)$.

\begin{prop}\cite{Wu}\label{相差估计}
	For all sufficiently small $\varepsilon>0$, there exists a function $\delta^u:N\to \mathbb{R}_{+}$ such that for  all $ x\in N$, $\delta^u(x)<\min\{\varepsilon\varepsilon(x),\rho - \sqrt{2}C^fQ(x)\}$, and when $y$ satisfies $d(f(x),y)<\delta^u(x)$, we have the following results for $F_{xy}|_{S_x}=D_0F_{xy}+\varphi_{xy}$ under the decomposition $E^s\oplus E^u$:
	\[\lVert D_{xy}^{us}\rVert\leq\varepsilon\varepsilon(x),\quad\left|m(D_{xy}^{uu})-m(Df|_{E^{u}(x)})\right|\leq\varepsilon\varepsilon(x),\]	\[\lVert D_{xy}^{su}\rVert\leq\varepsilon\varepsilon(x),\quad\left|\|D_{xy}^{ss}-\|Df|_{E^{s}(x)}\|\right|\leq\varepsilon\varepsilon(x).\]
	In addition, $\|\ D\varphi_{xy}\|_{C^0}\leq\varepsilon\varepsilon(x)$. For all $v, w\in S_x$, $\|D_v\varphi_{xy}-D_w\varphi_{xy}\|\leq\varepsilon\varepsilon(x)\|v - w\|^{\frac{\delta}{2}}$ holds.
	For $F_{xy}^{-1}=\exp_x^{-1}\circ f^{-1}\circ \exp_y$, when $d(f^{-1}(y),x)<\delta^s(y)$, similar conclusions hold.
\end{prop}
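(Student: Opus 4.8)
The plan is to factor the graph transform through its ``centered'' version and treat the rest as a small correction. Write $G:=\exp_{f(x)}^{-1}\circ f\circ\exp_x$ and $\psi_y:=\exp_y^{-1}\circ\exp_{f(x)}$, so that $F_{xy}=\psi_y\circ G$ on $S_x$. Since $D_0\exp_x=\mathrm{id}$ and $D_0\exp_{f(x)}=\mathrm{id}$, one has $G(0)=0$ and $D_0G=D_xf$, which by invariance of the splitting is block-diagonal from $E^s(x)\oplus E^u(x)$ to $E^s(f(x))\oplus E^u(f(x))$ with diagonal blocks $Df|_{E^s(x)}$ and $Df|_{E^u(x)}$. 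Thus for the centered map the off-diagonal blocks are exactly $0$, the $uu$-block has minimum norm exactly $m(Df|_{E^u(x)})$, and the $ss$-block has norm exactly $\|Df|_{E^s(x)}\|$; all the asserted estimates then reduce to bounding the perturbation produced by the nonlinearity of $G$ over the square $S_x$ and by the near-identity map $\psi_y$.

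For the nonlinearity of $G$: because $f\in C^{1+\alpha}$ and the exponential maps of the compact manifold $M$ have uniformly bounded geometry, $G$ is $C^{1+\alpha}$ with the H\"older seminorm of $DG$ bounded by a constant $L$ independent of $x$, so $\|D_vG-D_wG\|\le L\|v-w\|^\alpha$ on $S_x$ and $\|D_vG-D_0G\|\le L(\sqrt{2}Q(x))^\alpha$. Using $Q(x)<1$, the fact that $\delta/2<\alpha$ (which holds since $\delta<\alpha-\beta/\gamma$), and the inequality $Q(x)^{\alpha-\delta}\le\varepsilon\varepsilon(x)$ recorded in the setup, one splits off a factor $Q(x)^{\delta}$ or $Q(x)^{\delta/2}$ that tends to $0$ uniformly as $\varepsilon\to0$ (since $Q(x)\le\varepsilon^{2/(\alpha-\delta)}r_0^\gamma$); hence for $\varepsilon$ small one obtains $\|D\varphi^0\|_{C^0}\le LQ(x)^\alpha\le\varepsilon\varepsilon(x)$ and the H\"older-$\tfrac{\delta}{2}$ bound for the centered nonlinearity $\varphi^0:=G-D_0G$.

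Next I insert $\psi_y$. For $y\in N$ with $d(f(x),y)$ small, $\psi_y(0)=\exp_y^{-1}(f(x))$ has norm $d(f(x),y)$, while the $C^0$-distance of $D\psi_y$ from parallel transport along the minimizing geodesic and the H\"older seminorm of $D\psi_y$ are both $O(d(f(x),y))$, uniformly in $x$. Differentiating $F_{xy}=\psi_y\circ G$ and adding and subtracting terms, each block of $D_0F_{xy}$ measured in $E^s(y)\oplus E^u(y)$ differs from the corresponding block of $D_0G$ by at most a term of size $O(d(f(x),y))$ plus a term controlled by the angle between $E^\sigma(y)$ and $E^\sigma(f(x))$, and $\|D\varphi_{xy}\|_{C^0}\le\|D\varphi^0\|_{C^0}+O(d(f(x),y))$ with a comparable H\"older bound. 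The angle between $E^\sigma(y)$ and $E^\sigma(f(x))$ tends to $0$ as $y\to f(x)$ by continuity of the invariant splitting, which is a consequence of Assumptions $U(i)$ and $S(i)$, and the angle between $E^s(y)$ and $E^u(y)$ is positive at $y=f(x)$ and hence bounded below near $f(x)$. It then suffices to define $\delta^u(x)$ to be a sufficiently small fraction of $\min\{\varepsilon\varepsilon(x),\,\rho-\sqrt{2}C^fQ(x)\}$ so that all the $O(d(f(x),y))$ and angle contributions are $\le\tfrac12\varepsilon\varepsilon(x)$ whenever $d(f(x),y)<\delta^u(x)$; combined with the previous paragraph, every displayed inequality follows. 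The statement for $F_{xy}^{-1}=\exp_x^{-1}\circ f^{-1}\circ\exp_y$ follows verbatim with $f$ replaced by the $C^{1+\alpha}$ diffeomorphism $f^{-1}$ and the roles of $U$ and $S$, and of $x$ and $y$, interchanged, defining $\delta^s(y)$ by the analogous formula.

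The main obstacle is the uniform control, over all $x\in N$ including $x$ near $\partial N$, of the nonlinearity error against $\varepsilon\varepsilon(x)\asymp\varepsilon^2d(x,\partial N)^\beta$: that error is of order $Q(x)^\alpha\asymp\varepsilon^{2\alpha/(\alpha-\delta)}d(x,\partial N)^{\alpha\gamma}$, so the comparison forces $\alpha\gamma\ge\beta$, which is exactly what the admissible choice $0<\delta<\alpha-\beta/\gamma$ (hence $(\alpha-\delta)\gamma\ge\beta$) together with Assumption $R$ guarantees; this is why $Q$ is built with the exponent $\gamma$ and the factor $\varepsilon^{2/(\alpha-\delta)}$, the upshot being the single clean inequality $Q(x)^{\alpha-\delta}\le\varepsilon\varepsilon(x)$. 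By contrast, the corrections coming from $\psi_y$ and from the mismatch of the splittings at $y$ and at $f(x)$ only need to be $\le\tfrac12\varepsilon\varepsilon(x)$ for $d(f(x),y)<\delta^u(x)$, and since no regularity of $\delta^u$ is required, $\delta^u(x)$ may be chosen small pointwise using merely continuity of $E^s,E^u$ and positivity of the angle between them, both of which hold under $U(i)$ and $S(i)$. With these two ingredients in place, the remaining steps are routine chain-rule estimates together with the $C^{1+\alpha}$ bound on $f$, and the same mechanism handles $F_{xy}^{-1}$.
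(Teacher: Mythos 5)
The paper does not actually prove this proposition: it is imported verbatim (up to dropping the local monotonicity of $\delta^u$) from Proposition 3.1 of the cited reference, so there is no in-paper proof to compare against. Your argument is the standard one underlying that result and appears sound: the centered map $G=\exp_{f(x)}^{-1}\circ f\circ\exp_x$ is block-diagonal with exact blocks $Df|_{E^s(x)}$, $Df|_{E^u(x)}$; the $C^{1+\alpha}$ nonlinearity over $S_x$ is absorbed into $\varepsilon\varepsilon(x)$ precisely via the recorded inequality $Q(x)^{\alpha-\delta}\leq\varepsilon\varepsilon(x)$ together with the leftover factor $Q(x)^{\delta}$ (or $Q(x)^{\delta/2}$) being uniformly small; and the chart-change and splitting-mismatch errors are killed by shrinking $\delta^u(x)$ pointwise, which is legitimate exactly because the paper has discarded any regularity requirement on $\delta^u$. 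You correctly identify the crux, namely that $\delta<\alpha-\beta/\gamma$ forces $(\alpha-\delta)\gamma>\beta$ so the nonlinearity error of order $Q(x)^{\alpha}$ beats $\varepsilon^2 d(x,\partial N)^{\beta}$ near $\partial N$. The only point you take on faith is the continuity of $E^s,E^u$ (needed to control the angle mismatch between the splittings at $y$ and at $f(x)$ and to bound $\|P_y^u\|$); the paper itself asserts that Assumptions U(i) and S(i) are stronger than this continuity hypothesis, so your reliance on it is consistent with the paper's framework, though it is an assertion rather than something you or the paper verifies.
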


This is exactly Proposition 3.1 in \cite{Wu}. In the original paper, $\delta^u(x)$ is a locally monotonic function, which is required for the  study of Markov partitions and symbolic systems.  Here, we have made some simple modifications and the proposition still holds and does not affect the definition of the graph transform and the construction of local stable and unstable manifolds below.
\begin{Def}\cite{Wu} For $\forall x\in N$, a $u$-manifold and an $s$-manifold at $x$ refer to
	\[W_x^u=\exp_x\{(v^u,\phi_x^u(v^u))\in T_xM:\| v^u\|\leq Q(x)\}\subset M,\]
	\[W_x^s=\exp_x\{(\phi_x^s(v^s),v^s)\in T_xM:\| v^s\|\leq Q(x)\}\subset M,\]
	respectively, where $\phi^u_x:B^u(Q(x))\to E^s(x)$ and $\phi^s_x:B^s(Q(x))\to E^u(x)$ are $C^{1+\frac{\delta}{2}}$ functions such that $\|\phi_x^t\|_{C^0} \leq Q(x)$ ($t = s/u$). Furthermore, a $t$-manifold satisfying the following conditions is called a $t$-admissible manifold:
	\[\|\phi_x^t(0)\|\leq10^{-3}Q(x),\quad \|D\phi_x^t \|_{C^0}\leq \sqrt {\varepsilon},\quad \|D\phi_x^t \| _{C^{\frac {\delta}{2}}}=\|D\phi_x^t\|_{C^0} +Hol_{\frac{\delta}{2}}(D\phi_x^t)\leq\frac{1}{2},\]
	where $Hol_{\frac{\delta}{2}}(D\phi_x^t)=\sup_{v\neq w}\frac{\|D_v\phi_x^t-D_w\phi_x^t\|}{\Arrowvert v-w\|^\frac{\delta}{2}}$.
\end{Def}

\begin{Def}\cite{Wu}
	Denote $\mathscr{W}_x^t$ as the set of all $t$-admissible manifolds at $x$. When $d(f(x),y)<\delta^u(x)Q(x)^2Q(f(x))$ and $d(f^{-1}y,x)<\delta^s(y)Q(y)^2Q(f^{-1}y)$, for $F_{xy}|_{S_x}=D_0F_{xy}+\varphi_{xy}$,  denote $F_{xy}(v_x^u,\phi_x^u(v_x^u))=(G_{xy}^{\phi_x^u}(v_x^u),L_{xy}^{\phi_x^u}(v_x^u))$.\\
	(1) The unstable graph transform refers to the map $\mathscr{T}_{xy}^u:\mathscr{W}_x^u\to\mathscr{W}_y^u$ which sends $W_x^u\in\mathscr{W}_x^u$ with the representing function $\phi_x^u$ to $\mathscr{T}_{xy}^u({W}_x^u)\in\mathscr{W}_y^u$ with the representing function $\phi_y^u := L_{xy}^{\phi_x^u}(G_{xy}^{\phi_x^u})^{-1}$.That is,
	\[\mathscr{T}_{xy}^u[{W}_x^u]=\exp_y\{(v^u,\phi_y^u(v^u)):\|v^u\|\leq Q(y)\}\subset f(W_x^u)\]
	(2)The stable graph transform refers to the map $\mathscr{T}_{xy}^s:\mathscr{W}_y^s\to\mathscr{W}_x^s$ which maps ${W}_y^s\in\mathscr{W}_y^s$ to $\mathscr{T}_{xy}^s[{W}_y^s]\in\mathscr{W}_x^s$ with the representing function $\phi_x^s$. That is,
	\[
	\mathscr{T}_{xy}^s[{W}_y^s]=\exp_x\{(\phi_x^s(v^s),v^s)):\|v^s\|\leq Q(y)\}\subset f^{-1}(W_y^s)
	\]
\end{Def}

\begin{lem}
	For all sufficiently small $\varepsilon > 0$, if $d(f(x), y) < \delta^u(x)Q(x)^2Q(f(x))$ and $d(f^{-1}(y), x) < \delta^s(y)Q(y)^2Q(f^{-1}(y))$, then for any two $u$-admissible manifolds $W_1$ and $W_2$ at $x$, the following inequality holds:
	$$\|\widetilde{\phi}_1 - \widetilde{\phi}_2\|\leq\|Df|_{E^s(x)}\|^{\frac{1}{2}}\|\phi_1 - \phi_2\|$$
	where $\phi_1$ and $\phi_2$ are the representing functions of $W_1$ and $W_2$ respectively, $\widetilde{\phi}_1$ and $\widetilde{\phi}_2$ are the representing functions of $\mathscr{T}_{xy}^u({W}_1)$ and $\mathscr{T}_{xy}^u({W}_2)$ respectively, and $\widetilde{\phi}_i = L_{xy}^{\phi_i}(G_{xy}^{\phi_i})^{-1}$ for $i = 1, 2$.
\end{lem}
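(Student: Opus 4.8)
The plan is to run the standard graph-transform contraction estimate using the component bounds for $D_0F_{xy}$ and $\varphi_{xy}$ from Proposition~\ref{相差估计}, and then to upgrade the resulting contraction rate from ``some constant $<1$'' to the sharp value $\|Df|_{E^s(x)}\|^{1/2}$ by revisiting Lemma~\ref{扰动依据}. Since $\|\widetilde\phi_1-\widetilde\phi_2\|$ is a $C^0$-norm over $B^u(Q(y))$, I would fix $w^u\in B^u(Q(y))$ and set $v_i^u=(G_{xy}^{\phi_i})^{-1}(w^u)\in B^u(Q(x))$ for $i=1,2$, so that $G_{xy}^{\phi_i}(v_i^u)=w^u$ and $\widetilde\phi_i(w^u)=L_{xy}^{\phi_i}(v_i^u)$. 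It then suffices to bound $\|L_{xy}^{\phi_1}(v_1^u)-L_{xy}^{\phi_2}(v_2^u)\|$ by $\|Df|_{E^s(x)}\|^{1/2}\|\phi_1-\phi_2\|$, where $\|\phi_1-\phi_2\|$ is the $C^0$-norm over $B^u(Q(x))$; along the way one also controls $\|v_1^u-v_2^u\|$.

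First I would control $\|v_1^u-v_2^u\|$. Writing $F_{xy}=D_0F_{xy}+\varphi_{xy}$ and reading off the $E^u$-component, $G_{xy}^{\phi}(v^u)=D_{xy}^{uu}v^u+D_{xy}^{us}\phi(v^u)+\varphi_{xy}^u(v^u,\phi(v^u))$. From $G_{xy}^{\phi_1}(v_1^u)=G_{xy}^{\phi_2}(v_2^u)$ one gets $G_{xy}^{\phi_1}(v_1^u)-G_{xy}^{\phi_1}(v_2^u)=G_{xy}^{\phi_2}(v_2^u)-G_{xy}^{\phi_1}(v_2^u)$. On the left, the bounds $m(D_{xy}^{uu})\ge m(Df|_{E^u(x)})-\varepsilon\varepsilon(x)>1$, $\|D_{xy}^{us}\|\le\varepsilon\varepsilon(x)$, $\|D\varphi_{xy}\|_{C^0}\le\varepsilon\varepsilon(x)$ together with the admissibility bound $\|D\phi_1\|_{C^0}\le\sqrt\varepsilon$ give a lower bound $\bigl(m(Df|_{E^u(x)})-O(\varepsilon\varepsilon(x))\bigr)\|v_1^u-v_2^u\|$; on the right, the two terms differ only through $\phi_1$ versus $\phi_2$ in the stable slot, so the same bounds give an upper bound $2\varepsilon\varepsilon(x)\,\|\phi_1-\phi_2\|$. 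Since $m(Df|_{E^u(x)})>1$, this yields $\|v_1^u-v_2^u\|\le\Lambda\,\|\phi_1-\phi_2\|$ with $\Lambda=O(\varepsilon\varepsilon(x))$.

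Next, with $L_{xy}^{\phi}(v^u)=D_{xy}^{su}v^u+D_{xy}^{ss}\phi(v^u)+\varphi_{xy}^s(v^u,\phi(v^u))$, I would split
\[
\widetilde\phi_1(w^u)-\widetilde\phi_2(w^u)=\bigl(L_{xy}^{\phi_1}(v_1^u)-L_{xy}^{\phi_1}(v_2^u)\bigr)+\bigl(L_{xy}^{\phi_1}(v_2^u)-L_{xy}^{\phi_2}(v_2^u)\bigr).
\]
By Proposition~\ref{相差估计} (using $\|D_{xy}^{su}\|,\|D\varphi_{xy}\|_{C^0}\le\varepsilon\varepsilon(x)$, $\bigl|\,\|D_{xy}^{ss}\|-\|Df|_{E^s(x)}\|\,\bigr|\le\varepsilon\varepsilon(x)$ and $\|D\phi_1\|_{C^0}\le\sqrt\varepsilon$) the Lipschitz constant of $L_{xy}^{\phi_1}$ is $O(\sqrt\varepsilon)$, so the first piece is at most $O(\sqrt\varepsilon)\cdot\Lambda\,\|\phi_1-\phi_2\|$, a higher-order term in $\varepsilon\varepsilon(x)$. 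The second piece equals $D_{xy}^{ss}\bigl(\phi_1(v_2^u)-\phi_2(v_2^u)\bigr)+\varphi_{xy}^s(v_2^u,\phi_1(v_2^u))-\varphi_{xy}^s(v_2^u,\phi_2(v_2^u))$, of norm at most $\bigl(\|D_{xy}^{ss}\|+\varepsilon\varepsilon(x)\bigr)\|\phi_1-\phi_2\|\le\bigl(\|Df|_{E^s(x)}\|+2\varepsilon\varepsilon(x)\bigr)\|\phi_1-\phi_2\|$. Taking the supremum over $w^u$, I obtain $\|\widetilde\phi_1-\widetilde\phi_2\|\le\bigl(\|Df|_{E^s(x)}\|+C\varepsilon\varepsilon(x)\bigr)\|\phi_1-\phi_2\|$ for an absolute constant $C$.

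It remains to absorb the error into the square root, i.e.\ to check $\|Df|_{E^s(x)}\|+C\varepsilon\varepsilon(x)\le\|Df|_{E^s(x)}\|^{1/2}$ for all $x\in N$ once $\varepsilon$ is small; this is the only delicate point, because $\|Df|_{E^s(x)}\|$ need not be bounded away from $1$ near $\partial N$. Put $t=\|Df|_{E^s(x)}\|$. Since $t\ge m(D_xf|_{E^s(x)})\ge m(D_xf)\ge (C^f)^{-1}$, one has $\sqrt t-t=\sqrt t(1-\sqrt t)=\tfrac{\sqrt t}{1+\sqrt t}(1-t)\ge c_1(1-t)$ with $c_1=\bigl(1+\sqrt{C^f}\bigr)^{-1}>0$. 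On the other hand, inspecting the proof of Lemma~\ref{扰动依据} one sees that the inequality $\varepsilon\varepsilon(x)<1-\|Df|_{E^s(x)}\|$ can be strengthened to $C\varepsilon\varepsilon(x)\le c_1\bigl(1-\|Df|_{E^s(x)}\|\bigr)$ for all $x\in N$ once $\varepsilon$ is small enough: where $d(x,\partial N)\ge r_0$ or $d(f(x),\partial N)\ge r_0$ one has $\varepsilon\varepsilon(x)\le\varepsilon^2r_0^\beta$ against a gap bounded below by a constant, while for $d(f(x),\partial N)<r_0$ the proof already gives $\varepsilon\varepsilon(x)\le\varepsilon b^\beta C^f\bigl(1-\|Df|_{E^s(x)}\|\bigr)$, and each bound scales down as $\varepsilon\to0$. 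Combining, $C\varepsilon\varepsilon(x)\le c_1(1-t)\le\sqrt t-t$, i.e.\ $t+C\varepsilon\varepsilon(x)\le\sqrt t$, as needed. Thus the main obstacle is not the graph-transform bookkeeping (routine given Proposition~\ref{相差估计}) but this last step: getting the sharp factor $\|Df|_{E^s(x)}\|^{1/2}$ rather than a generic constant $<1$ forces one to re-examine Lemma~\ref{扰动依据} and to exploit the uniform lower bound $\|Df|_{E^s(x)}\|\ge(C^f)^{-1}$.
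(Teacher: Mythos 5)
Your proposal is correct and follows essentially the same route as the paper: the same graph-transform bookkeeping via Proposition \ref{相差估计} yielding a contraction factor $\|Df|_{E^s(x)}\|+O(\varepsilon\varepsilon(x))$ (your splitting at $L_{xy}^{\phi_1}(v_2^u)$ is an equivalent rearrangement of the paper's splitting at $\widetilde{\phi}_2\circ G_{xy}^{\phi_2}(v_x^1)$), followed by the same absorption step, which the paper carries out by introducing $a_0=\inf_{x\in N}\frac{\sqrt{\|Df|_{E^s(x)}\|}-\|Df|_{E^s(x)}\|}{1-\|Df|_{E^s(x)}\|}$ and invoking the corollary $C_0\varepsilon(x)<1-\|Df|_{E^s(x)}\|$ of Lemma \ref{扰动依据}. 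Your explicit lower bound $a_0\geq(1+\sqrt{C^f})^{-1}$ via $\|Df|_{E^s(x)}\|\geq(C^f)^{-1}$ is a detail the paper leaves implicit, and is a worthwhile addition.
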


\begin{proof}
	For $\forall v_y\in B_y^u(Q(y))$, let $v_x^i=(G_{xy}^{\phi_i})^{-1}(v_y)$ for $i = 1,2$. Then
	\begin{align*}
		\Arrowvert\widetilde{\phi}_1(v_y)-\widetilde{\phi}_2(v_y)\Arrowvert
		&=\Arrowvert \widetilde{\phi}_1\circ G_{xy}^{\phi_1}(v_x^1)-\widetilde{\phi}_2\circ G_{xy}^{\phi_1}(v_x^1)\Arrowvert\\
		&\leq\Arrowvert \widetilde{\phi}_1\circ G_{xy}^{\phi_1}(v_x^1)-\widetilde{\phi}_2\circ G_{xy}^{\phi_2}(v_x^1)\Arrowvert+\Arrowvert\widetilde{\phi}_2\circ G_{xy}^{\phi_2}(v_x^1)-\widetilde{\phi}_2\circ G_{xy}^{\phi_1}(v_x^1)\Arrowvert\\
		&\leq\|D^{ss}_{x,y}(\phi_1(v_x^1)-\phi_2(v_x^1))\|+\|\varphi^{s}_{x,y}(v_x^1,\phi_1(v_x^1))-\varphi^{s}_{x,y}(v_x^1,\phi_2(v_x^1))\|\\&\quad+Lip(\widetilde{\phi}_2)\Arrowvert G_{xy}^{\phi_2}(v_x^1)-G_{xy}^{\phi_1}(v_x^1)\Arrowvert\\
		&\leq\|D^{ss}_{x,y}\|\|(\phi_1(v_x^1)-\phi_2(v_x^1))\|+Lip(\varphi_{x,y})\|\phi_1(v_x^1)-\phi_2(v_x^1)\|+ \\&\quad Lip(\widetilde{\phi}_2)(D_{x,y}^{us}+Lip(\varphi_{x,y})) \Arrowvert\phi_2(v_x^1)-\phi_1(v_x^1)\Arrowvert\\
		&\leq(\|D^{ss}_{x,y}\|+\|D^{us}_{x,y}\|+2Lip(\varphi_{x,y}))\|\phi_1(v_x^1)-\phi_2(v_x^1)\|\\
		&\leq(Df|_{E^s(x)}+4\varepsilon\varepsilon(x))\Arrowvert\phi_1(v_x^1)-\phi_2(v_x^1)\Arrowvert
	\end{align*}
	Let $a_0 = \inf_{x\in N}\frac{\sqrt{\|Df|_{E^s(x)}\|}-\|Df|_{E^s(x)}\|}{1 - \|Df|_{E^s(x)}\|}$.
	By the corollary of Lemma \ref{扰动依据},  for sufficiently small $\varepsilon$ such that $\varepsilon<\frac{a_0C_0}{4}$, we have $$\|\widetilde{\phi}_1-\widetilde{\phi}_2\|
	\leq(\|Df|_{E^s(x)}\|+a_0C_0\varepsilon(x))\|\phi_1-\phi_2\|\leq\|Df|_{E^s(x)}\|^{\frac{1}{2}}\|\phi_1-\phi_2\|.$$ 
\end{proof}

\section{Stability of pointwise hyperbolic systems}
In this part, we first introduce the definition of pointwise pseudo orbits. Based on the graph transform and related lemmas in the preliminaries, we construct local stable and unstable manifolds on pointwise pseudo orbits. Consequently, we obtain the shadowing lemma and pointwise expansivity on pointwise pseudo orbits. On this basis, we further explore the preservation of pointwise hyperbolicity under sufficiently small perturbations, demonstrating that the perturbed system still has certain hyperbolicity under the assumption K. According to these conclusions, we finally illustrate the stability of pointwise hyperbolic systems under sufficiently small perturbations.
\subsection{Pointwise pseudo orbit shadowing and expansivity}
\begin{Def}\label{pointwise_pseudo_orbit}
	We call $\{x_n\}_{n\in \mathbb{Z}}\subset N$ an $\varepsilon$-pointwise pseudo orbit of $f$ if $\{x_n\}_{n\in \mathbb{Z}}$ satisfies:
	
	(1) For any $n > 0$, 
	\[d(f(x_n),x_{n+1})<\delta^u(x_n)Q(x_n)^2Q(f(x_n))\]
	\[d(f^{-1}(x_{n+1}),x_n)<\delta^s(x_{n+1})Q(x_{n+1})^2Q(f^{-1}(x_{n+1}))\]
	
	(2)$$\prod_{i=0}^\infty\|Df|_{E^s(x_{-i})}\|=0,\quad 
	\prod_{i=0}^\infty\|Df^{-1}|_{E^u(x_i)}\|=0.$$
	
\end{Def}

Notice that for $\forall x\in N$, the true orbit $\{f^nx\}_{n\in \mathbb{Z}}$ of $f$ is also a pointwise pseudo orbit of $f$. In fact, according to the initial assumptions U(i) and S(i), we have 
$$\prod_{i = 0}^{\infty}m(Df^{-1}|_{E^s(f^{-i}x)})=\infty,\quad\prod_{i = 0}^{\infty}m(Df|_{E^u(f^ix)})=\infty,\quad \forall x\in N$$
From this, we can infer that
$$\prod_{i = 0}^{\infty}\|Df|_{E^s(f^{-i}x)}\| = 0,\quad\prod_{i = 0}^{\infty}\|Df^{-1}|_{E^u(f^ix)}\| = 0.$$
Therefore, for $\forall x\in N$, the true orbit $\{f^nx\}_{n\in \mathbb{Z}}$ of $f$ satisfies the definition of a pointwise pseudo orbit.

Let $\{x_n\}_{n\in \mathbb{Z}}$ be an $\varepsilon$-pointwise pseudo orbit in $N$. Choose an arbitrary $u$-admissible manifold $W_{-n}^u$ at $x_{-n}$ and an arbitrary $s$-admissible manifold $W_{n}^s$ at $x_{n}$. Denote $\mathscr{T}_{-n,-n + 1}^u$ as the unstable graph transform from $x_{-n}$ to $x_{-n+1}$, which maps the $u$-admissible manifold at $x_{-n}$ to the $u$-admissible manifold at $x_{-n+1}$. Denote $\mathscr{T}_{n,n + 1}^s$ as the stable graph transform from $x_{n+1}$ to $x_{n}$, which maps the $s$-admissible manifold at $x_{n+1}$ to the $s$-admissible manifold at $x_{n}$. Then we have the following proposition:
\begin{prop}\label{稳定流形}\cite{Wu}
	For any sufficiently small $\varepsilon > 0$,
	
	(1)\hfill $W^u(\{x_{-n}\}_{n\geq0}):=\lim\limits_{n\to\infty}\mathscr{T}_{-1,0}^u\circ\cdots\circ \mathscr{T}_{-n,-n+1}^u(W_{-n}^u)$\hfill\mbox{}
	$$W^s(\{x_{n}\}_{n\geq 0}):=\lim_{n\to\infty}\mathscr{T}_{0,1}^s\circ\cdots\circ \mathscr{T}_{n-1,n}^s(W_{n}^s)$$
	are the $u$-admissible manifold and the $s$-admissible manifold at $x_0$ respectively, and we call them the local unstable and stable manifolds of the pointwise pseudo orbit $\{x_n\}_{n\in \mathbb{Z}}$.
	
	(2)\hfill 
	$W^u(\{x_{-n}\}_{n\geq0})=\{x\in \exp_{x_0}(S_{x_0}):\forall k\geq0,f^{-k}x \in \exp_{x_{-k}}(S_{x_{-k}})\}$\hfill\mbox{}
	\[W^s(\{x_{n}\}_{n\geq 0})=\{x\in \exp_{x_0}(S_{x_0}):\forall k\geq0,f^{k}x \in \exp_{x_{k}}(S_{x_{k}})\}\]
\end{prop}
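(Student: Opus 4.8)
The plan is to run the classical Hadamard--Perron graph-transform argument, exactly as in \cite{Wu}; the only preliminary observation is that condition (1) of Definition~\ref{pointwise_pseudo_orbit} is precisely the hypothesis under which each unstable graph transform $\mathscr{T}^u_{-n,-n+1}$ and stable graph transform $\mathscr{T}^s_{n,n+1}$ along $\{x_n\}$ is defined and contracting, so Proposition~\ref{相差估计} and the graph-transform contraction lemma above apply verbatim. For part (1), write $\Phi_n=\mathscr{T}^u_{-1,0}\circ\cdots\circ\mathscr{T}^u_{-n,-n+1}(W^u_{-n})$ with representing function $\widetilde{\phi}_n$. For $m>n$, $\Phi_n$ and $\Phi_m$ are the images of two $u$-admissible manifolds at $x_{-n}$ (namely $W^u_{-n}$ and $\mathscr{T}^u_{-n-1,-n}\circ\cdots\circ\mathscr{T}^u_{-m,-m+1}(W^u_{-m})$) under the same composition of the $n$ transforms $\mathscr{T}^u_{-j,-j+1}$, $1\le j\le n$; since each factor contracts the $C^0$-distance of representing functions by $\|Df|_{E^s(x_{-j})}\|^{1/2}$ and two admissible manifolds at $x_{-n}$ are $C^0$-close up to $2Q(x_{-n})\le 2\varepsilon^{2/(\alpha-\delta)}r_0^{\gamma}$, one gets
$$\|\widetilde{\phi}_n-\widetilde{\phi}_m\|_{C^0}\le 2\varepsilon^{2/(\alpha-\delta)}r_0^{\gamma}\prod_{j=1}^{n}\|Df|_{E^s(x_{-j})}\|^{1/2},$$
which tends to $0$ uniformly in $m$ by condition (2) of Definition~\ref{pointwise_pseudo_orbit}. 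Hence $\{\widetilde{\phi}_n\}$ is $C^0$-Cauchy; the uniform admissibility bounds $\|D\widetilde{\phi}_n\|_{C^0}\le\sqrt{\varepsilon}$, $\|D\widetilde{\phi}_n\|_{C^{\delta/2}}\le\tfrac12$ make $\{D\widetilde{\phi}_n\}$ bounded and equi-$\tfrac{\delta}{2}$-H\"older, so by the Arzel\`a--Ascoli theorem the $C^0$-limit $\phi^u$ is in fact $C^{1+\delta/2}$ and inherits the three bounds; thus $W^u(\{x_{-n}\}_{n\ge0})$ is a $u$-admissible manifold at $x_0$, and the displayed estimate applied to two choices of $\{W^u_{-n}\}$ shows it is independent of them. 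The $s$-statement is the mirror image under Assumption S.

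For part (2), the inclusion ``$\subseteq$'' is routine. Fix $k\ge0$; for $n\ge k$, associativity writes $\Phi_n=(\mathscr{T}^u_{-1,0}\circ\cdots\circ\mathscr{T}^u_{-k,-k+1})(\widehat{W}_{-k})$ with $\widehat{W}_{-k}=\mathscr{T}^u_{-k-1,-k}\circ\cdots\circ\mathscr{T}^u_{-n,-n+1}(W^u_{-n})$ a $u$-admissible manifold at $x_{-k}$, hence $\widehat{W}_{-k}\subset\exp_{x_{-k}}(S_{x_{-k}})$; since $\mathscr{T}^u_{xy}[W]\subset f(W)$, iterating gives $\Phi_n\subset f^k(\exp_{x_{-k}}(S_{x_{-k}}))$. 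This set is closed and $\Phi_n\to W^u(\{x_{-n}\}_{n\ge0})$ in the $C^0$-topology, hence in Hausdorff distance, so $W^u(\{x_{-n}\}_{n\ge0})\subset f^k(\exp_{x_{-k}}(S_{x_{-k}}))$ for all $k\ge0$; together with $W^u(\{x_{-n}\}_{n\ge0})\subset\exp_{x_0}(S_{x_0})$ this is the claimed inclusion.

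For the reverse inclusion ``$\supseteq$'' I would consider the decreasing sequence of closed sets $V_n=\{x\in\exp_{x_0}(S_{x_0}):f^{-j}x\in\exp_{x_{-j}}(S_{x_{-j}}),\ 0\le j\le n\}$, whose intersection is the right-hand set, and which in the chart at $x_0$ equals $\exp_{x_0}\big(S_{x_0}\cap F_{x_{-1},x_0}(S_{x_{-1}}\cap\cdots\cap F_{x_{-n},x_{-n+1}}(S_{x_{-n}})\cdots)\big)$. An induction on $j$, using the block bounds of Proposition~\ref{相差估计} (the off-diagonal blocks $D^{us},D^{su}$ and $\|D\varphi\|_{C^0}$ bounded by $\varepsilon\varepsilon(x_{-j})$, and $D^{ss}$ close to $\|Df|_{E^s(x_{-j})}\|$) together with the corollary of Lemma~\ref{扰动依据}, should show that this nested intersection, written in $E^s(x_0)\oplus E^u(x_0)$, lies in the strip $\{(v^u,v^s):\|v^u\|\le Q(x_0),\ \|v^s-\phi^u(v^u)\|\le\eta_n\}$ about the graph of $\phi^u$ with $\eta_n\le C\,Q(x_0)\prod_{j=1}^{n}\|Df|_{E^s(x_{-j})}\|^{1/2}$ for an absolute constant $C$; condition (1) of Definition~\ref{pointwise_pseudo_orbit} is used at each stage to guarantee that the $u$-direction of $F_{x_{-j},x_{-j+1}}(S_{x_{-j}})$ still covers $B^u(Q(x_{-j+1}))$, so the nesting does not become empty. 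By condition (2) of Definition~\ref{pointwise_pseudo_orbit}, $\eta_n\to0$, whence $\bigcap_n V_n\subset W^u(\{x_{-n}\}_{n\ge0})$. The stable statements follow by applying all of the above to $f^{-1}$ with the time-reversed pseudo orbit, exchanging the roles of $E^s$ and $E^u$.

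The main obstacle is exactly this strip-width estimate $\eta_n\le C\,Q(x_0)\prod_{j=1}^{n}\|Df|_{E^s(x_{-j})}\|^{1/2}$: because the chart radii $Q(x_{-j})$ and the error sizes $\varepsilon\varepsilon(x_{-j})$ vary with $d(x_{-j},\partial N)$, the inductive contraction of the strip must be carried out with constants uniform along the non-uniformly hyperbolic pseudo orbit. All of the needed ingredients are, however, already packaged into Proposition~\ref{相差估计} and the corollary of Lemma~\ref{扰动依据} (the very inputs that make condition (1) of Definition~\ref{pointwise_pseudo_orbit} the ``right'' distance threshold), so the argument of \cite{Wu} transcribes with only notational changes.
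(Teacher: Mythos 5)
Your part (1) and the forward inclusion in part (2) coincide with the paper's proof: the same Cauchy estimate obtained by iterating the contraction $\|\widetilde{\phi}_1-\widetilde{\phi}_2\|\leq\|Df|_{E^s(x)}\|^{1/2}\|\phi_1-\phi_2\|$ and invoking condition (2) of Definition~\ref{pointwise_pseudo_orbit}, the same Arzel\`a--Ascoli argument to show the $C^0$-limit is $C^{1+\delta/2}$ and admissible, and the same use of $\mathscr{T}^u_{xy}[W]\subset f(W)$ for ``$\subseteq$''. For ``$\supseteq$'' you take a genuinely different, dual route: you push the squares $S_{x_{-n}}$ forward and try to squeeze the nested intersection into a strip of width $\eta_n\to 0$ about the graph of $\phi^u$, whereas the paper fixes a point $x$ of the right-hand set, compares it with the point of $W^u(\{x_{-n}\}_{n\geq0})$ having the \emph{same} unstable coordinate $v^u_0$, and shows by induction that the difference of the two backward orbits stays in the stable cone ($\|\widetilde{v}^u_{-k}-v^u_{-k}\|\leq\|\widetilde{v}^s_{-k}-v^s_{-k}\|$), so that the stable deviation is multiplied by at least $m(Df^{-1}|_{E^s(x_{-k})})^{1/2}$ at each backward step and, being bounded by $2Q(x_{-k})<2$, must vanish. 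The two arguments are equivalent in content (via $\|Df|_{E^s(x)}\|=m(Df^{-1}|_{E^s(f(x))})^{-1}$), but the paper's two-point comparison is lighter: matching the unstable coordinates at time $0$ makes the cone condition free at the base of the induction and avoids any geometric description of the sets $F_{x_{-j},x_{-j+1}}(S_{x_{-j}})$. Be aware that the step you flag as your ``main obstacle'' --- the inductive strip-width bound $\eta_n\leq C\,Q(x_0)\prod_{j=1}^n\|Df|_{E^s(x_{-j})}\|^{1/2}$ --- is not a routine corollary of Proposition~\ref{相差估计}: it requires exactly the cone-invariance induction above applied to pairs of points of $V_n$, so as written your ``$\supseteq$'' direction defers its only nontrivial estimate; carrying it out would essentially reproduce the paper's computation.
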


The proof process of Proposition \ref{稳定流形} is basically the same as that of Proposition 5.2 in \cite{Wu}. The only difference is that in \cite{Wu}, the local stable and unstable manifolds are constructed on the $\varepsilon$-recurrent pointwise pseudo orbit. Taking the construction of the local unstable manifold as an example, the recurrence property ensures the existence of a constant subsequence. Then $\prod_{i = 0}^{\infty}\|Df|_{E^s(x_{-i})}\|^{\frac{1}{2}} = 0$. Consequently, $W^u(\{x_{-n}\}_{n\geq0})$ is a $u$-admissible manifold at $x_0$. In this paper, the local unstable manifold is constructed on the $\varepsilon$-pointwise pseudo orbit. Although the recurrence property is lacking, according to the definition of the pointwise pseudo orbit, we still have $\prod_{i = 0}^{\infty}\|Df|_{E^s(x_{-i})}\|^{\frac{1}{2}} = 0$. Therefore, the same conclusion holds. To ensure the integrity of this paper, the detailed proof process is still presented as follows. Only the case of the local unstable manifold is given below, and the case of the local stable manifold is similar.

\begin{proof}
	(1) Generally, for the $u$-admissible manifolds $W_1$ and $W_2$ at $x$, we denote $\|W_1 - W_2\|=\|\phi_1-\phi_2\|$, where $\phi_1$ and $\phi_2$ are their representing functions.
	For $n\geq0$, arbitrarily choose $W_{-n}^u\in\mathscr{W}_x^u$ at $x_{n}$.
	For $m > n$, we have
	\begin{equation*}
		\begin{aligned}
			&\|\mathscr{T}^{u}_{-1,0}\circ\cdots\circ\mathscr{T}^{u}_{-n,-n+1}(W_{-n}^u)-\mathscr{T}^{u}_{-1,0}\circ\cdots\circ\mathscr{T}^{u}_{-m,-m+1}(W_{-m}^u)\|\\
			&\leq\prod^n_{i=1}\|Df|_{E^s(x_{-i})}\|^{\frac{1}{2}}\|\|W_{-n}^u-\mathscr{T}^{u}_{-n-1,-n}\circ\cdots\circ\mathscr{T}^{u}_{-m,-m+1}(W_{-m}^u)\|\\
			&\leq\prod^n_{i=1}\|Df|_{E^s(x_{-i})}\|^{\frac{1}{2}}\rightarrow0(n\rightarrow\infty)
		\end{aligned}
	\end{equation*}
	Therefore, the sequence $\{\mathscr{T}^{u}_{-1,0}\circ\cdots\circ\mathscr{T}^{u}_{-n,-n + 1}(W_{-n}^u)\}_{n\geq1}$ converges. Next, we  will show that the limit is a $u$-admissible manifold at $x_0$.
	
	Since $\mathscr{T}^{u}_{-1,0}\circ\cdots\circ\mathscr{T}^{u}_{-m,-m + 1}(W_{-m}^u)\to W^u(\{x_n\}_{n\geq0})$ when $n\to\infty$, its representing functions $\phi^{(n)}\to\phi$ uniformly on $B_{x_0}^u(Q(x_0))$. From $\|\ D\phi^{(n)}\|_{C^{\frac {\delta}{2}}}=\|\ D\phi^{(n)}\|_{C^0}+\text{Hol}_{\frac{\delta}{2}}(D\phi^{(n)})\leq\frac{1}{2}$, we know that there exists a subsequence $D\phi^{(n_k)}\xrightarrow{k\rightarrow\infty}\sigma$ uniformly and $\|\sigma\|_{C^{\delta/2}}\leq\frac{1}{2}$. For $\forall v\in B^u_{x_0}(Q(x_0))$, we have
	\[
	\phi^{(n_k)}(v)=\phi^{(n_k)}(0)+\int_0^1D_{\lambda v}\phi^{(n_k)}vd\lambda\xrightarrow{k\rightarrow\infty}\phi(0)+\int_0^1\sigma(\lambda v)vd\lambda
	.\]
	So $\phi$ is differentiable and $D\phi = \sigma$. Since $\{D\phi^{(n)}\}$ has only one limit point, we obtain that $D\phi^{(n)}\to D\phi$ uniformly. Thus, we have $\|\phi(0)\|\leq10^{-3}Q(x_0)$, $\|\ D\phi\|_{C^0}\leq \sqrt {\varepsilon}$, $\|\ D\phi\|_{C^{\frac {\delta}{2}}}\leq\frac{1}{2}$, which means that $W^u(\{x_{-n}\}_{n\geq0})$ is a $u$-admissible manifold at $x_0$.
	
	(2) First, we can prove the inclusion “$\subset$”: Since $W^u(\{x_{-n}\}_{n\geq0})$ is a $u$-manifold at $x_0$, by the definition, for $\forall x\in W^u(\{x_{-n}\}_{n\geq0})$, we have $x\in \exp_{x_0}(S_{x_0})$. According to the definition of the graph transform, 
	\begin{equation*}
		\begin{aligned}
			W^u(\{x_{-n}\}_{n\geq0})&=\lim_{n\to\infty}\mathscr{T}_{-1,0}^u\circ\cdots\circ \mathscr{T}_{-n,-n+1}^u(W_{-n}^u)\\
			&=\mathscr{T}_{-1,0}^u[\lim_{n\to\infty}\mathscr{T}_{-2,-1}^u\circ\cdots\circ \mathscr{T}_{-n,-n+1}^u(W_{-n}^u)]\\
			&=\mathscr{T}_{-1,0}^u[W^u(\{x_{-n-1}\}_{n\geq0})]\\
			&\subset f(W^u(\{x_{-n-1}\}_{n\geq0}))
		\end{aligned}
	\end{equation*}
	thus, $f^{-k}(x)\in f^{-k}[W^u(\{x_{-n}\}_{n\geq0})]\subset W^u(\{x_{-n-k}\}_{n\geq0})\subset\exp_{x_{-k}}(S_{x_{-k}})$.
	
	Secondly, we prove the inclusion “$\supset$”: For $x\in N$ such that $\forall k\geq0$, $$f^{-k}x\in\exp_{x_{-k}}(S_{x_{-k}}).$$ Without loss of generality, denote $f^{-k}x = \exp_{x_{-k}}(v^u_{-k},v^s_{-k})$.
	Let the representing function of $W^u(\{x_{-n}\}_{n\geq0})$ be $\phi^u$, and then $\exp_{x_0}(v^u_0,\phi^u(v^u_0))\in W^u(\{x_{-n}\}_{n\geq0})$.
	Next, we only need to show that $\phi^u(v^u_0)=v^s_0$.
	According to the previous analysis, for $k\geq0$, let
	\[f^{-k}\exp_{x_0}(v^u_0,\phi^u(v^u_0))=\exp_{x_{-k}}(\widetilde{v}^u_{-k},\widetilde{v}^s_{-k})\in W^u(\{x_{-n - k}\}_{n\geq0}).\]
	Now we consider $F^{-1}_{x_{-k-1},x_{-k}}=\exp_{x_{-k-1}}^{-1}\circ f^{-1}\circ\exp_{x_{-k}}$. For $(v^u_{-k},v^s_{-k})\in S_{x_{-k}}$, it has the following form:\\
	$F^{-1}_{x_{-k-1},x_{-k}}(v^u_{-k},v^s_{-k})=( \widetilde{D}^{uu}_{-k-1,_-k}(v^u_{-k})+\widetilde{D}^{us}_{-k-1,_-k}(v^s_{-k})+\widetilde{\varphi}^{u}_{-k-1,_-k}(v^u_{-k},v^s_{-k}), \widetilde{D}^{ss}_{-k-1,_-k}\\(v^s_{-k})+\widetilde{D}^{su}_{-k-1,_-k}(v^u_{-k})+\widetilde{\varphi}^{s}_{-k-1,_-k}(v^u_{-k},v^s_{-k})) $,
	where $$\widetilde{D}_{-k-1,_-k}^{us}=P_{x_{-k-1}}^u\circ D_{f^{-1}(x_{-k})}\exp_{x_{-k-1}}^{-1}\circ D_{x_{-k}}f^{-1}|_{E^s(x_{-k})}:B_{x_{-k}}^{s}(Q(x_{-k}))\to  E^u(x_{-k-1}),$$$P_{x_{-k - 1}}^u$ is the projection in the direction of $E^u(x_{-k - 1})$. According to the conclusion in Proposition \ref{相差估计}, we know that
	\begin{equation*}
		\begin{aligned}
			\lVert \widetilde {v}^u_{-k-1}-v^u_{-k-1}\lVert
			&=
			\lVert	F^{-1}_{-k-1,-k}(\widetilde {v}^u_{-k},\widetilde {v}^s_{-k})|_{E^u}-F^{-1}_{-k-1,-k}(v^u_{-k},v^s_{-k})|_{E^u}\lVert\\
			&\leq\|\widetilde{D}^{uu}_{-k-1,-k}\| \|\widetilde{v}^u_{-k}-v^u_{-k}\|+\|\widetilde{D}^{us}_{-k-1,-k}\|\|\widetilde{v}^s_{-k}-v^s_{-k}\|+\\ &\quad Lip(\widetilde{\varphi}^{u}_{-k-1,-k})(\|\widetilde{v}^u_{-k}-v^u_{-k}\|+\|\widetilde{v}^s_{-k}-v^s_{-k}\|)\\
			&\leq(\|Df^{-1}|_{E^u(x_{-k})}\|+2\varepsilon\varepsilon(x_{-k}))\|\widetilde{v}^u_{-k}-v^u_{-k}\|+2\varepsilon\varepsilon(x_{-k})\|\widetilde{v}^s_{-k}-v^s_{-k}\|.
		\end{aligned}
	\end{equation*}
	\begin{equation*}
		\begin{aligned}
			\lVert \widetilde {v}^s_{-k-1,-k}\lVert
			&=
			\lVert	F^{-1}_{-k-1,-k}(\widetilde {v}^u_{-k},\widetilde {v}^s_{-k})|_{E^s}-F^{-1}_{-k-1,-k}(v^u_{-k},v^s_{-k})|_{E^s}\lVert\\
			&\geq m(\widetilde{D}^{ss}_{-k-1,-k}) \|\widetilde{v}^s_{-k}-v^s_{-k}\|-m(\widetilde{D}^{us}_{-k-1,-k})\|\widetilde{v}^u_{-k}-v^u_{-k}\|-\\ &\quad Lip(\widetilde{\varphi}^{s}_{-k-1,-k})(\|\widetilde{v}^u_{-k}-v^u_{-k}\|+\|\widetilde{v}^s_{-k}-v^s_{-k}\|)\\
			&\geq(m(Df^{-1}|_{E^s(x_{-k})})-2\varepsilon\varepsilon(x_{-k}))\|\widetilde{v}^s_{-k}-v^s_{-k}\|-2\varepsilon\varepsilon(x_{-k})\|\widetilde{v}^u_{-k}-v^u_{-k}\|
		\end{aligned}
	\end{equation*}
	
	Next, we will prove by mathematical induction that for $\forall k\geq0$, $\|\widetilde{v}^u_{-k}-v^u_{-k}\|\leq\|\widetilde{v}^s_{-k}-v^s_{-k}\|\leq\|\widetilde{v}^s_{-k - 1}-v^s_{-k - 1}\|$.
	
	When $k = 0$, $\|\widetilde{v}^u_{-k}-v^u_{-k}\| = 0$. And according to the corollary of Lemma \ref{扰动依据}, when $\varepsilon$ is small enough, $m(Df^{-1}|_{E^s(x_{-k})})-2\varepsilon\varepsilon(x_{-k})\geq1$. So the conclusion holds.
	Assume that the conclusion holds when $k = m$. Now we consider the case when $k=m + 1$.
	According to the corollary of Lemma \ref{扰动依据}, for $\varepsilon\leq C_0/4$,
	\begin{equation*}
		\begin{aligned}
			\lVert \widetilde {v}^u_{-m-1}-v^u_{-m-1}\lVert
			&\leq(\|Df^{-1}|_{E^u(x_{-m})}\|+2\varepsilon\varepsilon(x_{-m}))\|\widetilde{v}^u_{-m}-v^u_{-m}\|+2\varepsilon\varepsilon(x_{-m})\|\widetilde{v}^s_{-m}-v^s_{-m}\|\\
			&\leq(\|Df^{-1}|_{E^u(x_{-m})}\|+4\varepsilon\varepsilon(x_{-m}))\|\|\widetilde{v}^s_{-m}-v^s_{-m}\| \\
			&\leq\|\widetilde{v}^s_{-m}-v^s_{-m}\|\\
			&\leq\|\widetilde{v}^s_{-m-1}-v^s_{-m-1}\|.
		\end{aligned}
	\end{equation*}
	\begin{equation*}
		\begin{aligned}
			\lVert \widetilde {v}^s_{-m-2}-v^s_{-m-2}\lVert
			&\geq(m(Df^{-1}|_{E^u(x_{-m})})-2\varepsilon\varepsilon(x_{-m-1}))\|\widetilde{v}^s_{-m-1}-v^s_{-m-1}\|-2\varepsilon\varepsilon(x_{-m-1})\
			\\&\quad|\widetilde{v}^u_{-m-1}-v^u_{-m-1}\|\\
			&\geq(m(Df^{-1}|_{E^u(x_{-m})})-4\varepsilon\varepsilon(x_{-m-1}))\|\widetilde{v}^s_{-m-1}-v^s_{-m-1}\| \\
			&\geq\|\widetilde{v}^s_{-m-1}-v^s_{-m-1}\|.
		\end{aligned}
	\end{equation*}
	Therefore, the conclusion also holds when $k = m+ 1$.
	
	Let $a_1=\inf_{x\in N}\frac{m\left(Df^{-1}\big|_{E^s(x)}\right)-\sqrt{m\left(Df^{-1}\big|_{E^s(x)}\right)}}{m\left(Df^{-1}\big|_{E^s(x)}\right)-1}$,
	According to the assumption S(ii), for all $k\geq0$, when $\varepsilon\leq\frac{a_1C_0}{4}$,
	\begin{equation*}
		\begin{aligned}
			\lVert \widetilde {v}^s_{-k-2}-v^s_{-k-2}\lVert
			&\geq(m(Df^{-1}|_{E^u(x_{-m})})-a_1C_0\varepsilon(x_{-k-1}))\|\widetilde{v}^s_{-k-1}-v^s_{-k-1}\| \\
			&\geq\sqrt{m(Df^{-1}|_{E^s(x)}}\|\widetilde{v}^s_{-k-1}-v^s_{-k-1}\|.
		\end{aligned}
	\end{equation*}
	Then $$\|\widetilde{v}^s_{-k-1}-v^s_{-k-1}\|\geq\prod_{i=0}^{k-1}m(Df^{-1}|_{E^s(x_{-i})})^{\frac{1}{2}}\|\widetilde{v}^s_{0}-v^s_{0}\|=\prod_{i=0}^{k-1}m(Df^{-1}|_{E^s(x_{-i})})^{\frac{1}{2}}\|\phi^u(v^u_0)-v^s_{0}\|.$$
	Since $$\|\widetilde{v}^s_{-k-1}-v^s_{-k-1}\|\leq2Q(x_{-k})<2,  \prod_{i=0}^{k-1}m(Df^{-1}|_{E^s(x_{-i})})^{\frac{1}{2}}\xrightarrow{k\rightarrow\infty}\infty,$$
	we have $\|\phi^u(v^u_0)-v^s_{0}\|=0$, that is    $\phi^u(v^u_0)=v^s_0 $.
\end{proof}
\vspace{0.5cm}
\begin{Def}
	An $\varepsilon$-pointwise pseudo orbit $\{x_n\}_{n\in\mathbb{Z}}\subset N$ is said to be shadowed by $x\in N$ if for all $n\in\mathbb{Z}$, $f^nx\in\exp_{x_n}(S_{x_n})$.
\end{Def}

\begin{lem}\textbf{(Shadowing lemma)}\label{跟踪引理}
	If $f:M\rightarrow M$ is pointwise hyperbolic on the invariant set $N$ and satisfies the assumptions U, S, R, then for all sufficiently small $\varepsilon>0$, every $\varepsilon$-pointwise hyperbolic pseudo orbit $\{x_n\}_{n\in\mathbb{Z}}$ in $N$ can be shadowed by a unique $x\in N$, and $d(x,x_0)\leq\frac{1}{50}Q(x_0)$.
\end{lem}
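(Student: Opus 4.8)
The plan is to realize the shadowing point as the intersection of the two admissible manifolds produced by Proposition~\ref{稳定流形}. Given an $\varepsilon$-pointwise pseudo orbit $\{x_n\}_{n\in\mathbb{Z}}$, let $W^u:=W^u(\{x_{-n}\}_{n\ge 0})$ and $W^s:=W^s(\{x_n\}_{n\ge 0})$ be the local unstable and stable manifolds at $x_0$, with representing functions $\phi^u\colon B^u(Q(x_0))\to E^s(x_0)$ and $\phi^s\colon B^s(Q(x_0))\to E^u(x_0)$. A point $\exp_{x_0}(v^u,v^s)\in\exp_{x_0}(S_{x_0})$ lies in $W^u\cap W^s$ precisely when $v^s=\phi^u(v^u)$ and $v^u=\phi^s(v^s)$, i.e. when $v^u$ is a fixed point of $\Psi:=\phi^s\circ\phi^u$. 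So the whole construction reduces to a fixed-point argument for $\Psi$.

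First I would check that $\Psi$ is a well-defined self-map of $B^u(Q(x_0))$: the admissibility bounds $\|\phi^u(0)\|\le 10^{-3}Q(x_0)$, $\|D\phi^u\|_{C^0}\le\sqrt\varepsilon$ give $\|\phi^u(v)\|< Q(x_0)$ for $\|v\|\le Q(x_0)$ once $\varepsilon$ is small, so $\phi^s$ may be applied, and the analogous bounds for $\phi^s$ give $\|\Psi(v)\|<Q(x_0)$. Moreover $\Psi$ is Lipschitz with constant $\le\|D\phi^u\|_{C^0}\,\|D\phi^s\|_{C^0}\le\varepsilon<1$, so by the contraction mapping principle $\Psi$ has a unique fixed point $v^u_0$; I then set $v^s_0:=\phi^u(v^u_0)$ and $x:=\exp_{x_0}(v^u_0,v^s_0)$, noting $(v^u_0,v^s_0)\in S_{x_0}$. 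The distance estimate comes from iterating the contraction: $\|v^u_0\|\le(1-\varepsilon)^{-1}\|\Psi(0)\|\le(1-\varepsilon)^{-1}(1+\sqrt\varepsilon)\,10^{-3}Q(x_0)$ and $\|v^s_0\|\le 10^{-3}Q(x_0)+\sqrt\varepsilon\,\|v^u_0\|$, whence $d(x,x_0)=\|(v^u_0,v^s_0)\|\le\|v^u_0\|+\|v^s_0\|<\tfrac{1}{50}Q(x_0)$ for $\varepsilon$ small enough (here I use the normalization $d(\exp_{x_0}v,x_0)=\|v\|$ for $\|v\|<\rho$ recalled before Proposition~\ref{相差估计}).

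It remains to verify that $x$ shadows $\{x_n\}$ and is the unique such point, and here I would invoke part (2) of Proposition~\ref{稳定流形}. Since $x\in W^u$, the characterization gives $f^{-k}x\in\exp_{x_{-k}}(S_{x_{-k}})$ for all $k\ge 0$; since $x\in W^s$, it gives $f^{k}x\in\exp_{x_{k}}(S_{x_{k}})$ for all $k\ge 0$; together with $x\in\exp_{x_0}(S_{x_0})$ this is exactly the assertion $f^nx\in\exp_{x_n}(S_{x_n})$ for every $n\in\mathbb{Z}$, i.e. $x$ shadows $\{x_n\}$. Conversely, if $x'$ shadows $\{x_n\}$ then $f^{-k}x'\in\exp_{x_{-k}}(S_{x_{-k}})$ and $f^{k}x'\in\exp_{x_k}(S_{x_k})$ for all $k\ge 0$, so again by Proposition~\ref{稳定流形}(2) one has $x'\in W^u\cap W^s$, forcing $x'=x$ by uniqueness of the fixed point of $\Psi$. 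The step I expect to require the most care is not any single inequality but the bookkeeping around it: ensuring all compositions stay inside the domains $B^{u}(Q(x_0))$ and $B^{s}(Q(x_0))$ uniformly in the pseudo orbit, and tracking the admissibility constants ($10^{-3}$, $\sqrt\varepsilon$, $\tfrac12$) through the contraction so that the final bound is genuinely $\tfrac{1}{50}Q(x_0)$ rather than merely $O(Q(x_0))$; everything else follows directly from the graph-transform machinery already established.
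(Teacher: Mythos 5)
Your proposal is correct and follows essentially the same route as the paper: both realize the shadowing point as the unique intersection of $W^u(\{x_{-n}\}_{n\ge0})$ and $W^s(\{x_n\}_{n\ge0})$ via the contraction fixed point of $\phi^s\circ\phi^u$ on $B^u(Q(x_0))$, and both derive shadowing and uniqueness from Proposition~\ref{稳定流形}(2). The only cosmetic difference is in the distance estimate (you iterate the contraction from $0$, while the paper notes that $\phi^s\circ\phi^u$ preserves $B^u(10^{-2}Q(x_0))$); both yield $d(x,x_0)\le\frac{1}{50}Q(x_0)$, which also gives $x\in N$.
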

\begin{proof}
	By Proposition \ref{稳定流形}, $V^u(\{x_{-n}\}_{n\geq0})$ and $V^s(\{x_{n}\}_{n\geq0})$ are the $u$-admissible manifold and $s$-admissible manifold at $x_0$ respectively. According to the definition of pseudo orbit shadowing and Proposition \ref{稳定流形}(2), in fact, we only need to prove that there exists $x\in N$ such that $V^u(\{x_{-n}\}_{n\geq0})\cap V^s(\{x_{n}\}_{n\geq 0})=\{x\}$.
	
	Let $\phi^u$ and $\phi^s$ be the representing functions of $V^u(\{x_{-n}\}_{n\geq0})$ and $V^s(\{x_{n}\}_{n\geq0})$ respectively. First, it can be shown that $V^u(\{x_{-n}\}_{n\geq0})$ and $V^s(\{x_{n}\}_{n\geq 0})$ have an intersection point, that is, there exists $ x = \exp_{x_0}(v^u,v^s)$ such that $v^u=\phi^s(v^s)$ and $v^s=\phi^u(v^u)$ hold simultaneously.
	This can be transformed into finding the fixed point of $\phi^s\circ\phi^u$ on $B^{u}_{x_0}(Q(x_0))$. Notice that when $0 < \sqrt{\varepsilon}<10^{-2}-10^{-3}$, for $\forall v\in B^{t}_{x_0}(Q(x_0))(t = u/s)$ 
	,$$\Arrowvert\phi^t(v)\Arrowvert\leq\Arrowvert\phi^t(0)\Arrowvert+Lip(\phi^t)\|v\| \leq10^{-3}Q(x_0)+\sqrt{\varepsilon}\|v\|\leq10^{-2}Q(x_0)<Q(x_0),$$
	That is to say, $\phi^s\circ\phi^u$ maps $B^{u}_{x_0}(Q(x_0))$ into itself.
	Moreover, since $\|D(\phi^s\circ\phi^u)\|\leq\|D\phi^s\|\|D\phi^u\|\leq\varepsilon < 1$, $\phi^s\circ\phi^u$ is a contraction mapping. Then it has a unique fixed point $v^u_0$ in $B^{u}_{x_0}(Q(x_0))$ such that $\phi^s\circ\phi^u(v^u_0)=v^u_0$. Denote $w=(v^u_0,\phi^u(v^u_0))$, then $V^u(\{x_{-n}\}_{n\geq0})$ and $V^s(\{x_{n}\}_{n\geq0})$ intersect at the point $x = \exp_{x_0}(w)$.
	
	In fact, $\phi^s\circ\phi^u$ maps $B^{u}_{x_0}(10^{-2}Q(x_0))$ into itself, and $v^u_0$ is also its unique fixed point in $B^{u}_{x_0}(10^{-2}Q(x_0))$. Thus,
	$$\|w\|\leq \|v^u_0\|+\|v^s_0\|\leq10^{-2}Q(x_0)+10^{-2}Q(x_0)=\dfrac{1}{50}Q(x_0).$$
	Furthermore, $d(x,x_0)=\|w\|\leq\frac{1}{50}Q(x_0)<\frac{1}{50}d(x_0,\partial{N})$, then $x\in N$.
	According to the definition of pointwise pseudo orbit shadowing, $x$ shadows the pointwise pseudo orbit $\{x_n\}_{n \in\mathbb{Z}}$.
	If there is another point $y$ that shadows $\{x_n\}_{n\in \mathbb{Z}}$, then $y\in V^u(\{x_{-n}\}_{n\geq0})\cap V^s(\{x_{n}\}_{n\geq 0})$. Due to the uniqueness of the intersection point of $V^u(\{x_{-n}\}_{n\geq0})$ and $V^s(\{x_{n}\}_{n\geq 0})$, we can immediately get $x = y$. That is to say, the pointwise hyperbolic pseudo orbit $\{x_n\}_{n\in \mathbb{Z}}$ is shadowed by a unique $x\in N$.
\end{proof}
\vspace{0.4cm}
\begin{Def}
	A $C^{1 + \alpha}$ diffeomorphism $f:M\to M$ is said to be pointwise expansive on the invariant set $N$ if for every $x\in N$, there exists a sequence of functions $\{r_n(x)\}_{n\in \mathbb{Z}}$ such that when $d(f^nx,f^ny)\leq r_n(x)$ for every $n\in \mathbb{Z}$, then $x = y$.
\end{Def}

\begin{them}\label{逐点可扩}
	If the $C^{1 + \alpha}$ diffeomorphism $f:M\rightarrow M$ is pointwise hyperbolic on the invariant set $N$ and satisfies the  assumptions U, S, R, then $f$ is pointwise expansive on $N$.
\end{them}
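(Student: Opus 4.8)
The plan is to deduce pointwise expansivity directly from the uniqueness clause of the shadowing lemma (Lemma~\ref{跟踪引理}). Fix $x\in N$. As noted just after Definition~\ref{pointwise_pseudo_orbit}, the genuine orbit $\{f^nx\}_{n\in\mathbb{Z}}$ is itself an $\varepsilon$-pointwise pseudo orbit of $f$: condition (1) is vacuous since $d(f(f^nx),f^{n+1}x)=0$, and condition (2), namely $\prod_{i=0}^\infty\|Df|_{E^s(f^{-i}x)}\|=0$ and $\prod_{i=0}^\infty\|Df^{-1}|_{E^u(f^{i}x)}\|=0$, follows from Assumptions U(i) and S(i). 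Moreover $x$ shadows this pseudo orbit, because $f^nx=\exp_{f^nx}(O_{f^nx})\in\exp_{f^nx}(S_{f^nx})$ for every $n$.

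Next I would take the expansivity functions to be $r_n(x):=Q(f^nx)$, $n\in\mathbb{Z}$, after fixing $\varepsilon$ small enough that $Q(\cdot)<\rho$ and $Q(\cdot)<d(\cdot,\partial N)$ hold on all of $N$ (possible by the estimates in Section~2). Suppose $y\in M$ satisfies $d(f^nx,f^ny)\le r_n(x)$ for all $n\in\mathbb{Z}$. From $d(x,y)\le Q(x)<d(x,\partial N)$ one gets $y\in N$. For each $n$, since $d(f^nx,f^ny)\le Q(f^nx)<\rho$, the radial-isometry property of $\exp$ makes $v_n:=\exp_{f^nx}^{-1}(f^ny)$ well defined with $\|v_n\|=d(f^nx,f^ny)\le Q(f^nx)$; decomposing $v_n=v_n^u+v_n^s$ along $E^u(f^nx)\oplus E^s(f^nx)$, this norm bound forces $v_n^u\in B^u(Q(f^nx))$ and $v_n^s\in B^s(Q(f^nx))$, so $v_n\in S_{f^nx}$ and hence $f^ny\in\exp_{f^nx}(S_{f^nx})$. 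Thus $y$ also shadows $\{f^nx\}_{n\in\mathbb{Z}}$.

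By Lemma~\ref{跟踪引理} the pseudo orbit $\{f^nx\}_{n\in\mathbb{Z}}$ is shadowed by a unique point of $N$; since $x$ and $y$ both shadow it, $x=y$. As $x\in N$ was arbitrary, $f$ is pointwise expansive on $N$, with expansivity functions $r_n(x)=Q(f^nx)$.

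All the analytical content is absorbed into Lemma~\ref{跟踪引理}, so the remaining work is light; the one point needing care is the passage from the metric inequality $d(f^nx,f^ny)\le r_n(x)$ to the containment $f^ny\in\exp_{f^nx}(S_{f^nx})$. This requires choosing the single $\varepsilon$ small enough that, simultaneously, $\exp$ is a radial isometry on balls of radius $Q(\cdot)$, $Q(\cdot)<d(\cdot,\partial N)$ on $N$, and the Euclidean ball of radius $Q(f^nx)$ lies inside the square $S_{f^nx}$ (using orthogonality of the splitting $E^u\oplus E^s$ in the working metric; if that failed one would simply shrink $r_n(x)$ to an appropriate fraction of $Q(f^nx)$ depending on the transversality angle at $f^nx$, still a legitimate expansivity function). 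I expect no essential obstacle beyond reconciling these smallness requirements with the $\varepsilon$ produced by the shadowing lemma.
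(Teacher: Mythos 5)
Your proposal is correct and follows essentially the same route as the paper: take $r_n(x)=Q(f^nx)$, observe that the true orbit is a pointwise pseudo orbit shadowed by $x$, show that any $y$ with $d(f^nx,f^ny)\le Q(f^nx)$ also shadows it, and conclude $x=y$ from the uniqueness clause of Lemma~\ref{跟踪引理}. Your extra care about passing from the metric bound to the containment $f^ny\in\exp_{f^nx}(S_{f^nx})$ is a reasonable refinement of a step the paper states without comment.
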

\begin{proof}
	It can be seen from Proposition \ref{稳定流形} that for any sufficiently small $\varepsilon>0$, for any given $x\in N$, we can take the sequence of functions $r_n(x) = Q(f^nx)$. If $y\in N$  and  for $\forall n\in \mathbb{Z}$, 
	\[d(f^nx,f^ny)\leq r_n(x)=Q(f^nx).\]
	then we have $f^ny\in \exp_{f^nx}(S_{f^nx})$. Therefore, $y$ shadows the pointwise pseudo orbit $\{f^nx\}_{n\in \mathbb{Z}}$.
	Due to the uniqueness of shadowing, we get $x = y$. Thus, $f$ is pointwise expansive on $N$.
\end{proof}

\vspace{1cm}
\subsection{The preservation of pointwise hyperbolicity under perturbations}
\begin{lem}\cite{SRB}\label{锥定理}
	Let $f:M\rightarrow M $ be a $C^1$ diffeomorphism and $S\subset M$ satisfy $f(S) = S$ and $O = M\setminus S$. If for every $x\in O$, there exist two continuous cone families $\{C_x^u\}$ and $\{C_x^s\}$ such that:\\
	(1)$\quad D_xf(C_x^u)\subseteq C_{fx}^u,\quad D_xf(C_x^s)\supseteq C_{fx}^s$\\
	(2)$\quad\|D_xfv\|>\|v\| ~~\forall v\in C_x^u;~~\|D_xfv\|<\|v\| ~~\forall v\in C_x^s.$\\
	(3)$\quad\lim_{n\to \infty}\|D_xf^nv\|=\infty ~~\forall v\in C_x^u;
	\quad\lim_{n\to \infty}\|D_xf^{-n}v\|=\infty \quad\forall v \in C_x^s,$\\
	then $f$ is pointwise hyperbolic.
\end{lem}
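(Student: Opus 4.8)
The plan is to recover the invariant splitting as the ``asymptotic cone'' subspaces built from the two cone fields. Since $f(S)=S$ forces $f(O)=O$, the whole orbit of any $x\in O$ stays in $O$, so the cones along it are all defined, and I set
\[E^u(x):=\{v\in T_xM:\ D_xf^{-n}(v)\in C^u_{f^{-n}x}\ \text{ for all }n\ge 0\}=\bigcap_{n\ge 0}D_{f^{-n}x}f^{n}\big(C^u_{f^{-n}x}\big),\]
and dually $E^s(x):=\{v\in T_xM:\ D_xf^{n}(v)\in C^s_{f^{n}x}\ \text{ for all }n\ge 0\}$. From (1) alone the closed cones $C^u_n(x):=D_{f^{-n}x}f^{n}(C^u_{f^{-n}x})$ form a nested decreasing sequence with intersection $E^u(x)$, and a short computation from the orbit characterization (using only (1)) gives $D_xf(E^u(x))=E^u(fx)$ and $D_xf(E^s(x))=E^s(fx)$. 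Moreover $C^u_x\cap C^s_x=\{0\}$ is immediate from (2), since a common nonzero $v$ would satisfy $\|D_xfv\|>\|v\|$ and $\|D_xfv\|<\|v\|$ at once. So what remains is to show that $E^u,E^s$ are complementary linear subspaces on which $Df$ expands/contracts at a pointwise rate bounded off $1$.

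Next I would put subspaces of the correct dimension into $E^u$ and $E^s$. Each $C^u_y$ contains a $k^u$-dimensional subspace $P_y$ (that the stable and unstable cone fields are complementary, $k^u+k^s=\dim M$, is part of what such a pair means); then the $k^u$-planes $D_{f^{-n}x}f^{n}(P_{f^{-n}x})$ all lie in the compact Grassmannian $\mathrm{Gr}(k^u,T_xM)$, so some subsequence converges to a $k^u$-plane $V^u(x)$. By the nesting and closedness of the $C^u_n(x)$ this limit satisfies $V^u(x)\subseteq C^u_n(x)$ for every $n$, i.e. $V^u(x)\subseteq E^u(x)$; symmetrically $E^s(x)\supseteq V^s(x)$ with $\dim V^s(x)=k^s$. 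Since $V^u(x)\cap V^s(x)\subseteq C^u_x\cap C^s_x=\{0\}$ and $k^u+k^s=\dim M$, we get $T_xM=V^u(x)\oplus V^s(x)$.

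The main obstacle, as always with cone criteria, is to show that the cone $E^u(x)$ is \emph{exactly} the subspace $V^u(x)$ and not a strictly larger cone; this is where hypothesis (3) is essential. The plan: given $v\in E^u(x)$, decompose $v=a+b$ along $T_xM=V^u(x)\oplus V^s(x)$. Since every backward iterate $D_xf^{-n}v$ lies in the cone $C^u_{f^{-n}x}$, (2) makes $\|D_xf^{-n}v\|$ strictly decreasing in $n$, hence $\le\|v\|$ for all $n$; the same applies to $a\in V^u(x)\subseteq E^u(x)$, so $\|D_xf^{-n}a\|\le\|a\|$. Therefore $\|D_xf^{-n}b\|\le\|D_xf^{-n}v\|+\|D_xf^{-n}a\|$ stays bounded in $n$. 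But $b\in V^s(x)\subseteq C^s_x$, so by (3) one has $\|D_xf^{-n}b\|\to\infty$ unless $b=0$; hence $b=0$, $v\in V^u(x)$, and $E^u(x)=V^u(x)$. Dually $E^s(x)=V^s(x)$, so $T_NM=E^s\oplus E^u$ is an $f$-invariant splitting. (For continuity of the splitting — not required by the definition of pointwise hyperbolicity, but easy here — I would note that nested closed cones with linear-subspace intersection must become thin, so $C^u_n(x)$ collapses onto $E^u(x)$ and continuity of $x\mapsto E^u(x)$ follows from that of each $C^u_n(\cdot)$.)

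Finally I would set $\tau^u(x):=m(D_xf|_{C^u_x})$ and $\tau^s(x):=\|D_xf|_{C^s_x}\|$. Compactness of the unit spheres of the closed cones $C^u_x,C^s_x$ together with (2) gives $\tau^u(x)>1$ and $\tau^s(x)<1$ pointwise, and the inclusions $E^u(x)\subseteq C^u_x$, $E^s(x)\subseteq C^s_x$ then give $m(D_xf|_{E^u(x)})\ge\tau^u(x)>1$ and $\|D_xf|_{E^s(x)}\|\le\tau^s(x)<1$. This is precisely the definition of pointwise hyperbolicity, so $f$ is pointwise hyperbolic. The only genuinely delicate point in the argument is the extraction of $V^u(x)$ out of $E^u(x)$ via (3); everything else is bookkeeping with nested cones and a dimension count.
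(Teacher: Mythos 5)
The paper does not prove this lemma: it is imported verbatim from the cited reference \cite{SRB}, so there is no in-paper argument to compare against. Your proposal is the standard cone-criterion proof (nested images of the unstable cones, a Grassmannian compactness argument to plant complementary-dimensional planes inside the asymptotic intersections, and hypothesis (3) to collapse those intersections to exactly those planes), and it is correct as written; the only things you use beyond the literal statement are the usual conventions packed into the phrase ``continuous cone families'' (the cones are closed, contain subspaces of complementary dimensions $k^u+k^s=\dim M$, and (2)--(3) are read for nonzero vectors), which you appropriately flag rather than hide. This matches the argument one finds for the corresponding lemma in \cite{SRB}, so no gap to report.
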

\noindent\textbf{Assumption K:} There exists $\varepsilon_0 > 0$ such that for all $\varepsilon\leq\varepsilon_0$, we have 
$$\frac{\|Df|_{E^s(x)}\|}
{m(Df|_{E^u(x)})}
<\frac{\|Df|_{E^s(x)}\|+\varepsilon\varepsilon(x)+1}
{\|Df|_{E^s(f(x))}\|+\varepsilon\varepsilon(f(x))+1}
\cdot \frac{m(Df|_{E^u(f(x))})-\varepsilon\varepsilon(f(x))-1}{m(Df|_{E^u(x)})-\varepsilon\varepsilon(x)-1}.$$

Since as $x\to \partial N$, $\|Df|_{E^s(x)}\|+\varepsilon\varepsilon(x)\to 1$ and $m(Df|_{E^u(x)})-\varepsilon\varepsilon(x)\to 1$. In fact, to some extent, Assumption K describes  the relationship between $\frac{\|Df|_{E^s(x)}\|}{m(Df|_{E^u(x)})}$ and the rate at which the point approaches the boundary of $N$. 
Next, we consider a slight perturbation $g$ of $f$ on $N$ under the assumption K.
\vspace{0.25cm}
\begin{them}\label{保持逐点双曲}
	If $f$ is pointwise hyperbolic on the open set $N$ and satisfies assumptions U, S, R and K, then there exists a continuous function $\xi:N\to\mathbb{R}_{+}$ such that when $\varepsilon > 0$ is sufficiently small, the perturbed diffeomorphism $g$ satisfying the following conditions is also pointwise hyperbolic on $N$:
	for $\forall x\in N$,\\
	(1)\quad$\prod_{i=0}^\infty m(Df|_{E^u(g^ix)})=\infty
	, \prod_{i=0}^\infty m(Df^{-1}|_{E^s(g^{-i}x)})=\infty$\\
	(2)\quad$\max\{d(f(x),g(x)),\| D_xf-D_xg\|\}<\xi(x)\delta^u(x)Q(x)^2Q(f(x))$
\end{them}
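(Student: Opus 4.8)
The plan is to verify the hypotheses of the cone criterion, Lemma~\ref{锥定理}, for the perturbed diffeomorphism $g$, taking $S=M\setminus N$ and $O=N$ (here $g(N)=N$, since $g$ agrees with $f$ off $N$ and $f(N)=N$). So I must exhibit two \emph{continuous} cone families $\{C_x^u\}_{x\in N}$, $\{C_x^s\}_{x\in N}$ over $N$ satisfying (i) $D_xg(C_x^u)\subseteq C_{gx}^u$ and $D_xg(C_x^s)\supseteq C_{gx}^s$; (ii) $\|D_xgv\|>\|v\|$ for $v\in C_x^u$ and $\|D_xgv\|<\|v\|$ for $v\in C_x^s$; (iii) $\|D_xg^nv\|\to\infty$ for $v\in C_x^u$ and $\|D_xg^{-n}v\|\to\infty$ for $v\in C_x^s$. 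Since Assumptions U(i), S(i) (which, as the paper remarks, are stronger than continuity of the splitting) make $E^s,E^u$ continuous on $N$, it suffices to take cones about these subspaces, $C_x^u=\{v^u+v^s:\ v^t\in E^t(x),\ \|v^s\|\le\eta^u(x)\|v^u\|\}$ and $C_x^s=\{v^u+v^s:\ \|v^u\|\le\eta^s(x)\|v^s\|\}$, with continuous apertures $\eta^u,\eta^s:N\to\mathbb{R}_{+}$ to be pinned down; any such choice is automatically a continuous cone family.

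The first ingredient is a uniform-along-orbits comparison of $D_{g^ix}g$ with the hyperbolic data of $f$ at $g^ix$. Choosing $\xi$ small and using $\delta^u(\cdot)<\varepsilon\varepsilon(\cdot)$ and $Q<1$, hypothesis~(2) of the theorem gives $d(f(g^ix),g^{i+1}x)<\delta^u(g^ix)$ and $\|D_{g^ix}f-D_{g^ix}g\|<\xi(g^ix)\delta^u(g^ix)Q(g^ix)^2Q(f(g^ix))$ for every $i$, so Proposition~\ref{相差估计} applies to each pair $(g^ix,g^{i+1}x)$; reading $D_{g^ix}g$ through the exponential charts in the splittings $E^u(g^ix)\oplus E^s(g^ix)\to E^u(g^{i+1}x)\oplus E^s(g^{i+1}x)$ and then adding the $C^1$-discrepancy, one gets that its diagonal blocks $A_i,D_i$ satisfy $m(A_i)\ge m(Df|_{E^u(g^ix)})-\theta_i$, $\|D_i\|\le\|Df|_{E^s(g^ix)}\|+\theta_i$ and its off-diagonal blocks have norm $\le\theta_i$, with $\theta_i\le 2\varepsilon\varepsilon(g^ix)$ (the $\varepsilon\varepsilon$-term from Proposition~\ref{相差估计}, the rest from the $C^1$-bound and the chart change at $g(x)$ versus $f(x)$); in particular $\theta_i<\tfrac12\min\{m(Df|_{E^u(g^ix)})-1,\ 1-\|Df|_{E^s(g^ix)}\|\}$ by the Corollary of Lemma~\ref{扰动依据} once $\varepsilon$ is small. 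The analogous statements for $D_{g^ix}g^{-1}$ follow from the second half of Proposition~\ref{相差估计} and Assumption~S.

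With this, hypotheses (ii) and (iii) of Lemma~\ref{锥定理} are the comparatively routine part. For (iii), tracking the $E^u$-component of $D_xg^nv$ (which stays in the cone by (i)) gives $\|D_xg^nv\|\ge\prod_{i=0}^{n-1}\big(m(Df|_{E^u(g^ix)})-4\varepsilon\varepsilon(g^ix)\big)\|v^u\|$; since $m-\sqrt m\ge\tfrac12(m-1)$, the Corollary of Lemma~\ref{扰动依据} makes each factor $\ge\sqrt{m(Df|_{E^u(g^ix)})}$ for small $\varepsilon$, so the product is $\ge\big(\prod_{i=0}^{n-1}m(Df|_{E^u(g^ix)})\big)^{1/2}\|v^u\|\to\infty$ by hypothesis~(1) of the theorem; the stable half is dual and uses the second product in hypothesis~(1). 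For (ii), the block estimates give $\|D_xgv\|\ge(m(Df|_{E^u(x)})-4\varepsilon\varepsilon(x))\|v^u\|$ on $C_x^u$ while $\|v\|\le\sqrt{1+\eta^u(x)^2}\,\|v^u\|$, so $\|D_xgv\|>\|v\|$ holds provided $\eta^u(x)^2<m(Df|_{E^u(x)})-1$; the stable aperture is constrained dually by $1-\|Df|_{E^s(x)}\|$.

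The heart of the matter is hypothesis (i), cone invariance, where Assumptions U(ii), S(ii), K are used and where the non-uniformity near $\partial N$ bites. For the unperturbed map, $D_xf$ contracts the $u$-cone aperture by the factor $\|Df|_{E^s(x)}\|/m(Df|_{E^u(x)})<1$, so invariance for $f$ reduces to $\tfrac{\|Df|_{E^s(x)}\|}{m(Df|_{E^u(x)})}\eta^u(x)\le\eta^u(fx)$; Assumption~K is precisely the quantitative inequality that makes this, and (after folding in the $\theta_i$) its perturbed counterpart, solvable with a strictly positive margin by an aperture of the shape $\eta^u(x)=c\cdot\dfrac{m(Df|_{E^u(x)})-\varepsilon\varepsilon(x)-1}{\|Df|_{E^s(x)}\|+\varepsilon\varepsilon(x)+1}$ --- which one also checks is small enough for step (ii) and is continuous and positive on $N$ --- while Assumptions U(ii), S(ii), by capping how fast a $g$-orbit can move toward $\partial N$ in terms of the local expansion/contraction, keep the inequality propagating along every orbit. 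The leftover errors --- the $\theta_i$ and the gap between $\eta^u(fx)$ and $\eta^u(gx)$, the latter controlled by $d(fx,gx)<\xi(x)\delta^u(x)Q(x)^2Q(f(x))$ together with continuity of the data --- must be absorbed into that margin; since both the margin and the relevant moduli of continuity degenerate as $x\to\partial N$, this is what forces $\xi$ to be a function vanishing at $\partial N$ rather than a constant. One then takes $\xi$ to be a continuous positive lower bound for the pointwise requirements so obtained (possible, e.g., via a partition of unity) and $\varepsilon$ smaller still; all of (i), (ii), (iii) then hold for $g$, so Lemma~\ref{锥定理} yields that $g$ is pointwise hyperbolic on $N$. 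I expect this cone-invariance bookkeeping --- getting Assumptions U(ii), S(ii), K and the size of $\xi$ to cooperate so that invariance survives uniformly along all $g$-orbits --- to be the main obstacle.
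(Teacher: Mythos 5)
Your proposal follows essentially the same route as the paper: the same cone criterion (Lemma~\ref{锥定理}), the same cone apertures $\kappa_x^u=\min\{1,\frac{m(Df|_{E^u(x)})-\varepsilon\varepsilon(x)-1}{\|Df|_{E^s(x)}\|+\varepsilon\varepsilon(x)+1}\}$, the same use of Assumption~K to get cone invariance with a strictly positive margin $\rho_x$ into which the perturbation errors (controlled by choosing $\xi(x)$ small) are absorbed, and the same use of hypothesis~(1) together with the Corollary of Lemma~\ref{扰动依据} for the expansion/contraction and the unbounded growth along orbits. The only cosmetic difference is that the paper, after verifying (i)--(iii), explicitly reconstructs the invariant splitting $G_x^u=\bigcap_{n\ge0}D_{g^{-n}x}g^n(C^u_{g^{-n}x})$ rather than citing the lemma as a black box, which does not change the substance.
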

\begin{proof}
	Recall that $\delta^u(x)<\min\{\varepsilon\varepsilon(x),\rho - \sqrt{2}C^fQ(x)\}$. When $$d(f(x),y)<\delta^u(x)Q(x)^2Q(f(x)),  d(f^{-1}(y),x)<\delta^s(y)Q(y)^2Q(f^{-1}(y)),$$ there are corresponding graph transform and pointwise pseudo orbits are defined.
	Let $\|\cdot\|$ be the norm induced by the Riemannian metric, and $\|\cdot\|_{\triangle}$ be its box norm under $E^u\oplus E^s$. Choose two continuous cone families of $f$:
	$$C_x^u=\{v\in T_xM:\|v_s\|<\kappa_x^u\|v_u\|\}$$
	$$C_x^s=\{v\in T_xM:\|v_u\|<\kappa_x^u\|v_s\|\}$$
	where $$\kappa_x^u=\min\{1,\frac{m(Df|_{E^u(x)})-\varepsilon\varepsilon(x)-1}{\|Df|_{E^s(x)}\|+\varepsilon\varepsilon(x)+1}\},$$
	$$\kappa_x^s=\min\{1,\frac{m(Df^{-1}|_{E^s(x)})-\varepsilon\varepsilon(x)-1}{\|Df^{-1}|_{E^u(x)}\|+\varepsilon\varepsilon(x)+1}\}.$$ Next, we will prove that $g$ is pointwise hyperbolic by following the method of Lemma \ref{锥定理}.  In fact, we only discuss the unstable cone $C_x^u$, and the case of the stable cone $C_x^s$ is similar.
	\begin{enumerate}
		\item[(i)]
		$ D_xg(C_x^u)\subseteq C_{g(x)}^u,D_xg(C_x^s)\supseteq C_{g(x)}^s$
		\item[(ii)]
		$\|D_xgv\|_\triangle>\|v\|_\triangle \quad\forall v\in C_x^u; \quad\|D_xgv\|_\triangle<\|v\|_\triangle \quad\forall v\in C_x^s.$
		\item[(iii)]
		$\lim_{n\to \infty}\|D_xg^nv\|_\triangle=\infty \quad\forall v\in C_x^u;
		\quad\lim_{n\to  \infty} \|D_xg^{-n}v\| _\triangle=\infty \quad\forall v \in C_x^s.$
	\end{enumerate}
	\textbf{Step 1:} First, it can be proven that $D_xg(C_x^u)\subseteq C_{gx}^u$.
	Notice that with respect to \(E^u\oplus E^s\), for \(\forall v=(v_u,v_s)\in C_x^u\),
	\begin{equation}\label{可扩1}
		\begin{aligned}
			\frac{\|(D_xgv)_s\|}{\|(D_xgv)_u\|}&=\frac{\|(D_xgv_s)_s+(D_xgv_u)_s\|}{\|(D_xgv_u)_u+(D_xgv_s)_u\|}\\
			&\leq\frac{\|(D_xg)_{ss}v_s\|+\|(D_xg)_{su}v_u\|}{\|(D_xg)_{uu}v_u\|-\|(D_xg)_{us}v_s\|}\\
			&\leq\frac{\|(D_xg)_{ss}\|\|v_s\|+\|(D_xg)_{su}\|\|v_u\|}{m((D_xg)_{uu})\|v_u\|-\|(D_xg)_{us}\|\|v_s\|}\\
			&\leq\frac{\|(Dg)_{ss}\|\kappa_x^u\|v_u\|+\|(D_xg)_{su}\|\|v_u\|}{m((D_xg)_{uu})\|v_u\|-\|(D_xg)_{us}\|\kappa_x^u\|v_u\|}\\
			&=\frac{\|(D_xg)_{ss}\|\kappa_x^u+\|(D_xg)_{su}\|}{m((D_xg)_{uu})-\|(D_xg)_{us}\|\kappa_x^u}
		\end{aligned}
	\end{equation}
	Since $\frac{\|Df|_{E^s(x)}\|}
	{m(Df|_{E^u(x)})}=\frac{\|(D_xf)_{ss}\|}{m((D_xf)_{uu})},$	by Assumption K,
	\begin{equation}\label{可扩2}
		\begin{aligned}
			\frac{\|(D_xf)_{ss}\|}{m((D_xf)_{uu})}\kappa_x^u
			&\leq \frac{\|Df|_{E^s(x)}\|}
			{m(Df|_{E^u(x)})}\cdot\frac{m(Df|_{E^u(x)})-\varepsilon\varepsilon(x)-1}{\|Df|_{E^s(x)}\|+\varepsilon\varepsilon(x)+1}\\
			&<\min\{1,\frac{m(Df|_{E^u(f(x))})-\varepsilon\varepsilon(f(x))-1}{\|Df|_{E^s(f(x))}\|+\varepsilon\varepsilon(f(x))+1}\}\\
			&=\kappa_{f(x)}^u 
		\end{aligned}
	\end{equation}
	Denote $\rho_x=\kappa_{f(x)}^u-\frac{\|(D_xf)_{ss}\|}{m((D_xf)_{uu})}\kappa_x^u$. Then there exists a function $\xi(x)>0$ which is small enough such that when
	$$\max\{d(f(x),g(x)),\|D_xf - D_xg\|\}<\xi(x)\delta^u(x)Q(x)^2Q(f(x)),$$
	then $\|(Dg)_{su}\|$ and $\|(D_xg)_{us}\|$ are arbitrarily small, and $\|(D_xg)_{ss}\|$, $m((D_xg)_{uu})$ are arbitrarily close to $\|(D_xf)_{ss}\|$, $m((D_xf)_{uu})$ respectively, so that 
	$$\frac{\|(D_xg)_{ss}\|\kappa_x^u+\|(D_xg)_{su}\|}{\|m((D_xg)_{uu})-\|(D_xg)_{us}\|\kappa_x^u}\leq	\frac{\|(D_xf)_{ss}\|}{m((D_xf)_{uu})}\kappa_x^u+\frac{1}{3}\rho_x,$$
	$$\frac{m(Df|_{E^u(g(x))})-\varepsilon\varepsilon(g(x))-1}{\|Df|_{E^s(g(x))}\|+\varepsilon\varepsilon(g(x))+1}\geq\frac{m(Df|_{E^u(f(x))})-\varepsilon\varepsilon(f(x))-1}{\|Df|_{E^s(f(x))}\|+\varepsilon\varepsilon(f(x))+1}-\frac{1}{3}\rho_x.$$
	Therefore, according to (\ref{可扩1}),(\ref{可扩2}), we have
	
	\begin{equation*}
		\begin{aligned}
			\frac{\|(D_xgv)_s\|}{\|(D_xgv)_u\|}
			&\leq\frac{\|(D_xg)_{ss}\|\kappa_x^u+\|(D_xg)_{su}\|}{\|m((D_xg)_{uu})-\|(D_xg)_{us}\|\kappa_x^u}\\
			&\leq\frac{\|(D_xf)_{ss}\|}{m((D_xf)_{uu})}\kappa_x^u+\frac{1}{3}\rho_x\\
			&<\kappa_{f(x)}^u-\frac{1}{3}\rho_x\\
			&=\min\{1,\frac{m(Df|_{E^u(f(x))})-\varepsilon\varepsilon(f(x))-1}{\|Df|_{E^s(f(x))}\|+\varepsilon\varepsilon(f(x))+1}\}-\frac{1}{3}\rho_x\\
			&\leq\min\{1,\frac{m(Df|_{E^u(g(x))})-\varepsilon\varepsilon(g(x))-1}{\|Df|_{E^s(g(x))}\|+\varepsilon\varepsilon(g(x))+1}\}\\
			&=\kappa_{g(x)}^u.
		\end{aligned}
	\end{equation*}
	That is, $D_xg(v)\in C_{gx}^u$. Due to the arbitrariness of $v$, we can conclude that $D_xg(C_x^u)\subseteq C_{gx}^u$.\\
	\textbf{Step 2:}
	It can be shown that for all $x\in N$ and all $v\in C_x^u$, $\|D_xgv\|_{\triangle}>\|v\|_{\triangle}$. Furthermore, \(\lim_{n\rightarrow\infty}\|D_xg^nv\|_{\triangle}=\infty\).
	
	For all $v\in C_x^u$, we have $\|D_xfv\|_{\triangle}\geq m(Df|_{E^u(x)})\|v\|_{\triangle}$. Moreover,
	let $$a_2 = \inf_{x\in N}\frac{m(Df|_{E^u(x)})-\sqrt{m(Df|_{E^u(x)})}}{m(Df|_{E^u(x)}) - 1}.$$
	Take $\xi(x)<1$ and $\varepsilon$ sufficiently small such that
$$\max\{d(f(x),g(x)),\|D_xf - D_xg\|\}<\xi(x)\delta^u(x)Q(x)^2Q(f(x))\leq\varepsilon\varepsilon(x).$$
	Moreover, if $\varepsilon < a_2C_0$, then
	\begin{equation*}
		\begin{aligned}
			\|D_xgv\|_\triangle
			&\geq(m(Df|_{E^u(x)})-\varepsilon\varepsilon(x))\|v\|_\triangle\\
			&\geq(m(Df|_{E^u(x)})-a_2C_0\varepsilon(x))\|v\|_\triangle \\
			&\geq\sqrt{m(Df|_{E^u(x)})}\|v\|_\triangle>\|v\|_\triangle,
		\end{aligned}	
	\end{equation*}
	then
	$$\lim_{n\to \infty}\|D_xg^nv\|_\triangle\geq \prod_{i=0}^\infty\sqrt {m(Df|_{E^u(g^ix)})}\|v\|_\triangle.$$
	Since $g$ satisfies $\prod_{i = 0}^{\infty}m(Df|_{E^u(g^ix)})=\infty$, we have $\lim_{n\to \infty}\|D_xg^nv\|_\triangle=\infty.$\\
	\textbf{Step 3:}
	According to (i)-(iii) above, $G_x^u=\bigcap_{n = 0}^{\infty}D_{g^{-n}x}g^n(C_{g^{-n}x}^u)$ is the expanding subspace of $g$.
	In fact, $G_x^u=\bigcap_{n = 0}^{\infty}D_{g^{-n}x}g^n(C_{g^{-n}x}^u)$ is an invariant subspace of $T_xM$.
	By contradiction, if it is not, then there exist $v_1,v_2\in G_x^u$, while $v_1 - v_2\in C_x^s$. So we have 
	\begin{equation}\label{可扩3}
		\lim_{n\to  \infty} \|D_xg^{-n}(v_1-v_2)\| _\triangle=\infty.
	\end{equation}
	On the other hand, since $G_x^u\subseteq D_{g^{-n}x}g^n(C_{g^{-n}x}^u)$, for all $n \in\mathbb{Z}$, we have $v_1,v_2\in D_{g^{-n}x}g^n(C_{g^{-n}x}^u)$.
	That is, $D_xg^{-n}(v_1),D_xg^{-n}(v_2)\in C_{g^{-n}x}^u$.
	Then $$ \|D_xg^{-n}(v_1-v_2)\|_\triangle\leq\|D_xg^{-n}(v_1)\|_\triangle+\|D_xg^{-n}(v_2)\|_\triangle	<\|v_1\|_\triangle+\|v_2\|_\triangle
	<\infty
	$$
	This is in contradiction with (\ref{可扩3}). Moreover, by $D_xg(G_x^u)=\bigcap_{n = 0}^{\infty}D_{g^{-n}x}g^{n + 1}(C_{g^{-n}x}^u)=G_{g(x)}^u$, we can see that $G_x^u$ is an invariant subspace of $Dg$.
	Next, we will show that for all $v\in C_x^u$, $\|D_xgv\|>\|v\|$. This is because 
	\begin{equation}\label{可扩4}
		\begin{aligned}
			\|D_xgv\|&=\|D_xg(v_s+v_u)\|\\
			&\geq\|D_xgv_u\|-\|D_xgv_s\|\\
			&\geq m(Dg|_{E^u(x)})\|v_u\|-\|Dg|_{E^s(x)}\|\|v_s\|\\
			&\geq( m(Df|_{E^u(x)})-\varepsilon\varepsilon(x))\|v_u\|-(\|Df|_{E^s(x)}\|+\varepsilon\varepsilon(x))\|v_s\|.
		\end{aligned}
	\end{equation}
	Since $v\in C_x^u$, then we have
	\begin{equation}\label{可扩5}
		\|v_s\|<\kappa_x^u\|v_u\|
		\leq \frac{m(Df|_{E^u(x)})-\varepsilon\varepsilon(x)-1}{\|Df|_{E^s(x)}\|+\varepsilon\varepsilon(x)+1}\|v_u\|.
	\end{equation}
	By (\ref{可扩4}),(\ref{可扩5}),
	$$\|D_xgv\|\geq( m(Df|_{E^u(x)}) -\varepsilon\varepsilon(x))\|v_u\|-(\|Df|_{E^s(x)}\|+\varepsilon\varepsilon(x))\|v_s\|	>\|v_u\|+\|v_s\|\geq\|v\|$$
	Since $G_x^u\subseteq C_x^u$, $Dg$ is expansive on $G_x^u$. So far, we have proved that $G_x^u$ is an expansive invariant subspace of $T_xM$.
	Similarly, we can obtain that $G_x^s$ is a contractive invariant subspace of $T_xM$. Furthermore, because $G_x^u\cap G_x^s = \{0\}$ and their dimensions are complementary, we have $T_xM = G_x^u\oplus G_x^s$, which is the hyperbolic decomposition of $g$. That is, $g$ is pointwise hyperbolic on $N$.
\end{proof}
\subsection{ The stability of pointwise hyperbolic systems}
 \begin{lem}\label{Q(x)接近}
	For any sufficiently small $\varepsilon>0$, when $d(x,y)\leq Q(x)$, we have 
	$$\frac{1}{2}Q(x)\leq Q(y)\leq2Q(x).$$
\end{lem}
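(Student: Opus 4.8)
The plan is to reduce the statement to a one‑line Lipschitz estimate for the truncated distance $D(x):=\min\{r_0,d(x,\partial N)\}$. First I would note that since $t\mapsto t^\gamma$ is increasing on $[0,\infty)$ we have $\min\{r_0^\gamma,d(x,\partial N)^\gamma\}=D(x)^\gamma$, so that
$$Q(x)=\varepsilon^{\frac{2}{\alpha-\delta}}D(x)^\gamma ,$$
and the asserted inequalities are equivalent to $\tfrac12\le (D(y)/D(x))^\gamma\le 2$. The exponent $\tfrac{2}{\alpha-\delta}$ is a well‑defined positive number: by Assumption R and the choice $0<\delta<\alpha-\tfrac{\beta}{\gamma}$ one has $\alpha-\delta>\tfrac{\beta}{\gamma}>0$. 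I would also observe that $d(x,y)\le Q(x)<d(x,\partial N)$ forces $y\in N$, so $D(y)$ is defined, and that $D(x)>0$ for $x\in N$. This reformulation in terms of $D$ is what lets us avoid splitting into the cases $d(x,\partial N)\ge r_0$ and $d(x,\partial N)<r_0$.

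Next I would use two elementary facts. (a) The function $x\mapsto D(x)$ is $1$‑Lipschitz, being the composition of the $1$‑Lipschitz map $x\mapsto d(x,\partial N)$ with the $1$‑Lipschitz truncation $t\mapsto\min\{r_0,t\}$. (b) For every $x\in N$ one has $0<D(x)\le d(x,\partial N)<\operatorname{diam}M<1$, hence $D(x)^{\gamma-1}\le 1$ because $\gamma>1$. Combining (a) with the hypothesis $d(x,y)\le Q(x)$ and then (b),
$$|D(x)-D(y)|\le d(x,y)\le Q(x)=\varepsilon^{\frac{2}{\alpha-\delta}}D(x)^\gamma\le\varepsilon^{\frac{2}{\alpha-\delta}}D(x),$$
so after dividing by $D(x)>0$,
$$1-\varepsilon^{\frac{2}{\alpha-\delta}}\le\frac{D(y)}{D(x)}\le 1+\varepsilon^{\frac{2}{\alpha-\delta}}.$$

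Finally I would raise this to the power $\gamma$ and multiply through by $Q(x)=\varepsilon^{\frac{2}{\alpha-\delta}}D(x)^\gamma$ to obtain
$$(1-\varepsilon^{\frac{2}{\alpha-\delta}})^\gamma\,Q(x)\le Q(y)\le(1+\varepsilon^{\frac{2}{\alpha-\delta}})^\gamma\,Q(x).$$
Since $\gamma$ is fixed and $\varepsilon^{\frac{2}{\alpha-\delta}}\to 0$ as $\varepsilon\to 0^+$, for all sufficiently small $\varepsilon$ — concretely as soon as $\varepsilon^{\frac{2}{\alpha-\delta}}\le 1-2^{-1/\gamma}$, which also yields $\varepsilon^{\frac{2}{\alpha-\delta}}\le 2^{1/\gamma}-1$ — we get $(1-\varepsilon^{\frac{2}{\alpha-\delta}})^\gamma\ge\tfrac12$ and $(1+\varepsilon^{\frac{2}{\alpha-\delta}})^\gamma\le 2$, which is exactly the claim. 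The computation is entirely routine; the only point that needs a moment's thought is the cutoff at $r_0$ inside the minimum, and that is dispatched cleanly by working with $D(x)$ from the outset, so I do not expect a genuine obstacle here.
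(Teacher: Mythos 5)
Your proof is correct, and it reaches the same thresholds on $\varepsilon$ as the paper (namely $\varepsilon^{\frac{2}{\alpha-\delta}}\le 1-2^{-1/\gamma}$, which indeed implies $\varepsilon^{\frac{2}{\alpha-\delta}}\le 2^{1/\gamma}-1$), but it is organized differently. The paper splits into four cases according to whether $d(x,\partial N)$ and $d(y,\partial N)$ are above or below $r_0$, and in each case separately runs the triangle-inequality estimate $|d(x,\partial N)-d(y,\partial N)|\le d(x,y)\le Q(x)\le\varepsilon^{\frac{2}{\alpha-\delta}}d(x,\partial N)$ before raising to the power $\gamma$. You absorb all four cases into the single observation that $D(x)=\min\{r_0,d(x,\partial N)\}$ is $1$-Lipschitz and satisfies $Q(x)=\varepsilon^{\frac{2}{\alpha-\delta}}D(x)^\gamma\le\varepsilon^{\frac{2}{\alpha-\delta}}D(x)$ (the latter using $D(x)<\operatorname{diam}M<1$ and $\gamma>1$, exactly the facts the paper invokes in its Cases 3 and 4). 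The underlying estimate is identical; what your version buys is brevity and the elimination of the case analysis, at the cost of one extra remark (that truncation by $r_0$ preserves the Lipschitz constant). Either write-up is acceptable; yours is the cleaner of the two.
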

In fact, when $d(x,y)\leq Q(x)=\varepsilon^{\frac{2}{\alpha - \delta}}\min\{r_0^{\gamma},d(x,\partial N)^{\gamma}\}$, for sufficiently small $\varepsilon>0$, $d(x,\partial N)$ and $d(y,\partial N)$ are approximately equal, which means that $Q(x)$ and $Q(y)$ are approximately equal.
\begin{proof}Specifically, it can be divided into the following cases.\\
	\textbf{Case 1:} When $d(x,\partial N)\geq r_0$ and $d(y,\partial N)\geq r_0$, we have $Q(x) = Q(y)=\varepsilon^{\frac{2}{\alpha - \delta}}r_0^{\gamma}$, so the inequality $\frac{1}{2}Q(x)\leq Q(y)\leq2Q(x)$ obviously holds.
	\\\textbf{Case 2:} When $d(y,\partial N)\leq r_0\leq d(x,\partial N)$, we have $Q(y)=\varepsilon^{\frac{2}{\alpha - \delta}}d(y,\partial N)^{\gamma}$ and $Q(x)=\varepsilon^{\frac{2}{\alpha - \delta}}r_0^{\gamma}$.
	Notice that $Q(y)=\varepsilon^{\frac{2}{\alpha - \delta}}d(y,\partial N)^{\gamma}\leq\varepsilon^{\frac{2}{\alpha - \delta}}r_0^{\gamma} = Q(x)$, so $Q(y)\leq 2Q(x)$ clearly holds.
	On the other hand, to prove $Q(x)\leq 2Q(y)$, we only need to prove $r_0\leq 2^{\frac{1}{\gamma}}d(y,\partial N)$.
	Since
	\begin{equation*}
		\begin{aligned}
			d(y,\partial N)&\geq d(x,\partial N)-d(x,y)\\
			&\geq d(x,\partial N)-Q(x) \\
			&=d(x,\partial N)-\varepsilon^\frac{2}{\alpha-\delta}r_0^\gamma\\
			&\geq(1-\varepsilon^\frac{2}{\alpha-\delta}) r_0,  
		\end{aligned}
	\end{equation*}
	when $\varepsilon<(1-2^{-{\frac{1}{\gamma}}})^{\frac{\alpha-\delta}{2}}$, we have $r_0\leq 2^\frac{1}{\gamma} d(y,\partial N) $.\\
	\textbf{Case 3:} When $d(x,\partial N)\leq r_0\leq d(y,\partial N)$, we have $Q(x)=\varepsilon^{\frac{2}{\alpha - \delta}}d(x,\partial N)^{\gamma}$ and $Q(y)=\varepsilon^{\frac{2}{\alpha - \delta}}r_0^{\gamma}$.
	Notice that $Q(x)=\varepsilon^{\frac{2}{\alpha - \delta}}d(x,\partial N)^{\gamma}\leq\varepsilon^{\frac{2}{\alpha - \delta}}r_0^{\gamma}=Q(y)$, so $Q(x)\leq 2Q(y)$ clearly holds.
	Similarly, to prove $Q(y)\leq 2Q(x)$, we only need to prove $r_0\leq 2^{\frac{1}{\gamma}}d(x,\partial N)$.
	Since $d(x,\partial N)< 1$ and $\gamma>1$, we know that $d(x,\partial N)^{\gamma}<d(x,\partial N)$, so 
	\begin{equation*}
		\begin{aligned}
			r_0&\leq d(y,\partial N)\leq d(x,\partial N)+d(x,y)\\
			&\leq d(x,\partial N)+Q(x)\\&\leq(1+\varepsilon^\frac{2}{\alpha-\delta})d(x,\partial N).
		\end{aligned}
	\end{equation*}
	Take $\varepsilon<(2^{\frac{1}{\gamma}} - 1)^{\frac{\alpha-\delta}{2}}$, then $r_0\leq 2^{\frac{1}{\gamma}}d(x,\partial N)$.\\
	\textbf{Case 4:} When $d(x,\partial N)\leq r_0$ and $d(y,\partial N)\leq r_0$, we have $Q(x)=\varepsilon^{\frac{2}{\alpha - \delta}}d(x,\partial N)^{\gamma}$ and $Q(y)=\varepsilon^{\frac{2}{\alpha - \delta}}d(y,\partial N)^{\gamma}$. To prove $Q(y)\leq 2Q(x)$, we only need to prove $d(y,\partial N)\leq 2^{\frac{1}{\gamma}}d(x,\partial N)$. Since
	\[d(y,\partial N)\leq d(x,\partial N)+d(x,y)\leq d(x,\partial N)+Q(x)\leq(1 + \varepsilon^{\frac{2}{\alpha - \delta}})d(x,\partial N),\]
	Similar to Case 3, by taking $\varepsilon<(2^{\frac{1}{\gamma}} - 1)^{\frac{\alpha - \delta}{2}}$, we can get $Q(y)\leq 2Q(x)$. To prove $Q(x)\leq 2Q(y)$, we only need to prove $d(x,\partial N)\leq 2^{\frac{1}{\gamma}}d(y,\partial N)$.Since
	\begin{equation*}
		\begin{aligned}
			d(y,\partial N)&\geq d(x,\partial N)-d(x,y)\\
			&\geq d(x,\partial N)-Q(x) \\
			&=d(x,\partial N)-\varepsilon^\frac{2}{\alpha-\delta}d(x,\partial N)^\gamma\\
			&\geq(1-\varepsilon^\frac{2}{\alpha-\delta}) d(x,\partial N),  
		\end{aligned}
	\end{equation*}
	similar to Case 2, if we take $\varepsilon<(1 - 2^{-\frac{1}{\gamma}})^{\frac{\alpha-\delta}{2}}$, then $Q(x)\leq 2Q(y)$.
\end{proof}
\begin{lem}\label{证h连续}
	For every $x\in N$ and every $\lambda>0$, there exists $K(x,\lambda)\in\mathbb{N}$ such that if $y\in N$ satisfies
	$$d(f^nx,f^ny)\leq Q(f^nx)\quad\text{for all}\quad |n|\leq K(x,\lambda),$$
	then $d(x,y)<\lambda$.
\end{lem}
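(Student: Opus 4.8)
The plan is to argue by contradiction, combining the compactness of $M$ with the pointwise expansivity of $f$ established in Theorem \ref{逐点可扩}. Suppose the statement fails for some $x \in N$ and some $\lambda > 0$. Then for every $k \in \mathbb{N}$ there is a point $y_k \in N$ with
\[
d(f^n x, f^n y_k) \le Q(f^n x) \quad \text{for all } |n| \le k, \qquad \text{but} \qquad d(x, y_k) \ge \lambda .
\]
Taking $n = 0$ gives $d(x, y_k) \le Q(x)$, so we may as well assume $\lambda \le Q(x)$ (otherwise $K = 0$ already works, since the hypothesis then forces $d(x,y) \le Q(x) < \lambda$). Since $Q(x) < d(x, \partial N)$ for $\varepsilon$ small, as recalled in Section 2, all the $y_k$ lie in the closed ball $\overline{B(x, Q(x))}$, which is a compact subset of $N$.

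First I would extract a convergent subsequence $y_{k_j} \to y \in \overline{B(x, Q(x))} \subset N$; then $d(x,y) = \lim_j d(x, y_{k_j}) \ge \lambda > 0$, so $y \ne x$. Next, fixing $n \in \mathbb{Z}$: for every $j$ with $k_j \ge |n|$ one has $d(f^n x, f^n y_{k_j}) \le Q(f^n x)$, and since the right-hand side is a constant independent of $j$ while $f^n y_{k_j} \to f^n y$ by continuity of $f^n$, passing to the limit gives $d(f^n x, f^n y) \le Q(f^n x)$ — note that no regularity of $Q$ is needed here. As $n$ is arbitrary and $f^n y \in N$ for all $n$ (because $f(N) = N$), the point $y$ satisfies $d(f^n x, f^n y) \le Q(f^n x)$ for every $n \in \mathbb{Z}$.

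Finally I would invoke Theorem \ref{逐点可扩}: its proof produces precisely the expansivity functions $r_n(x) = Q(f^n x)$, so the relations just obtained force $x = y$, contradicting $y \ne x$. This contradiction yields the desired $K(x,\lambda)$. The only point requiring a little care is the first step — ensuring that the limit $y$ does not escape to $\partial N$, so that $f^n y$ and $Q(f^n x)$ remain meaningful and Theorem \ref{逐点可扩} is applicable; this is exactly where the built-in inequality $Q(x) < d(x, \partial N)$ is used, confining the $y_k$ to a compact subset of $N$. Everything after that is the standard mechanism by which expansivity upgrades a pointwise tracking condition to a uniform-continuity estimate, as in Walters' topological-stability argument.
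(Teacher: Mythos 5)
Your proposal is correct and follows essentially the same route as the paper: a contradiction argument extracting a convergent subsequence $y_{k_j}\to y$, passing to the limit in the tracking inequalities, noting $d(x,y)\leq Q(x)<d(x,\partial N)$ so that $y\in N$, and then invoking the pointwise expansivity of Theorem \ref{逐点可扩} with $r_n(x)=Q(f^nx)$ to force $x=y$. Your extra care in confining the $y_k$ to the compact ball $\overline{B(x,Q(x))}\subset N$ is a slightly cleaner packaging of the paper's own observation, but the argument is the same.
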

\begin{proof}
	Proof by contradiction: Assume that the statement does not hold, then for some $x\in N$ and $\lambda > 0$, for all $K\geq1$, there exists $y_K\in N$ such that when $d(f^nx,f^ny_K)\leq Q(f^nx)$ for all $|n|\leq K$, we have $d(x,y_K)\geq\lambda$.
	Since $M$ is compact, we can take a sequence $\{K_i\}_{i\rightarrow\infty}$ such that $y_{K_i}\to y\in\overline{N}$ , then  $$d(f^nx,f^ny_{K_i})\leq Q(f^nx)~~\forall |n|\leq K_i\Rightarrow d(x,y_{K_i})\geq\lambda .$$
	Therefore, if for all $n \in\mathbb{Z}$, $d(f^nx,f^ny)\leq Q(f^nx)$, then $d(x,y)\geq\lambda$. Notice that $d(x,y)\leq Q(x)<d(x,\partial N)$, so $y\in N$. According to the pointwise expansivity of $f$, for any $x,y\in N$, when for all $n$, $d(f^nx,f^ny)\leq Q(f^nx)$, then $x = y$. This leads to a contradiction.
\end{proof}

\begin{lem}\cite{Mun}\label{满}
	Let $M$ and $N$ be topological manifolds, and $ f: M\rightarrow N$ be a homeomorphism between the two topological manifolds. Then there exists a positive continuous function $\psi(x)$ such that if $d(f(x), g(x))<\psi(x)$ and $g(\text{Bd}(M))\subset\text{Bd}(N)$, then $g$ is surjective.
\end{lem}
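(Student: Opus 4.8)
This is a classical application of degree theory; the plan is to reduce to the case where $f$ is the identity and then exploit the homotopy invariance of the Brouwer degree, using the double of $M$ to dispose of the boundary.

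The first step is to normalize. Since $f^{-1}\colon N\to M$ is a homeomorphism of compact spaces it is uniformly continuous, so its modulus of continuity provides a positive continuous $\psi$ for which $d(f(x),g(x))<\psi(x)$ forces $d\big(x,(f^{-1}\!\circ g)(x)\big)$ to be as small as we please; moreover, invariance of the boundary for manifolds gives $f(\mathrm{Bd}\,M)=\mathrm{Bd}\,N$, so the hypothesis $g(\mathrm{Bd}\,M)\subset\mathrm{Bd}\,N$ becomes $(f^{-1}\!\circ g)(\mathrm{Bd}\,M)\subset\mathrm{Bd}\,M$, and $g$ is onto iff $f^{-1}\!\circ g$ is. Writing $\tilde g=f^{-1}\!\circ g\colon M\to M$, it thus suffices to produce $\psi_0>0$ such that any continuous $\tilde g$ with $d(x,\tilde g(x))<\psi_0$ for all $x$ and $\tilde g(\mathrm{Bd}\,M)\subset\mathrm{Bd}\,M$ is surjective.

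Next I would pass to the double $DM=M\cup_{\mathrm{Bd}\,M}M$, a compact manifold without boundary, equipped with a metric restricting to the given one on each copy near the seam. Because $\tilde g$ maps $\mathrm{Bd}\,M$ into itself, it extends to a well-defined continuous $D\tilde g\colon DM\to DM$ carrying each copy of $M$ into itself. For $\psi_0$ small enough $D\tilde g$ is uniformly $C^0$-close to $\mathrm{id}_{DM}$, and a standard fact (a geodesic homotopy for a fixed auxiliary Riemannian metric, or a Lebesgue-number/chart argument in the purely topological case) gives $D\tilde g\simeq\mathrm{id}_{DM}$; hence $\deg(D\tilde g)=\deg(\mathrm{id}_{DM})=1\neq 0$, reading degrees with $\mathbb Z/2$ coefficients if $DM$ fails to be orientable. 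A continuous self-map of nonzero degree of a closed connected manifold is surjective — a missed point $p$ would let the map factor through $DM\setminus\{p\}$, whose top homology vanishes, forcing degree $0$ — so $D\tilde g$ is onto (applied componentwise if $M$ is disconnected, which $D\tilde g$ respects).

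Finally I would descend. Since $D\tilde g$ preserves each copy of $M$, an interior point of $M$ can be hit only from the same copy, so $\mathrm{int}(M)\subset\tilde g(M)$; as $\tilde g(M)$ is compact, hence closed, and $\mathrm{int}(M)$ is dense in $M$, we get $M=\overline{\mathrm{int}(M)}\subset\tilde g(M)$, i.e.\ $\tilde g$ is surjective, and unwinding the reduction so is $g$. The one genuinely delicate point is compatibility with the boundary: a naive segment homotopy between $\mathrm{id}$ and $\tilde g$ need not keep $\mathrm{Bd}\,M$ inside $\mathrm{Bd}\,M$ (and the same issue reappears if one works instead with the relative degree of $(\tilde g;M,\mathrm{Bd}\,M)$), and the doubling construction is exactly what makes this a non-issue; everything else is the routine machinery of the Brouwer degree. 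For non-compact $M$ the argument runs identically with proper maps and compactly supported cohomology, but compactness — which holds in our setting — makes this unnecessary.
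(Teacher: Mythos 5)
The paper does not actually prove this lemma; it is quoted verbatim from Munkres's \emph{Elementary Differential Topology}, so there is no in-paper argument to compare against. Your degree-theoretic proof is, on its own terms, correct and standard: the reduction to $\tilde g=f^{-1}\circ g$ close to the identity, the passage to the double $DM$ (legitimate because $\tilde g(\mathrm{Bd}\,M)\subset\mathrm{Bd}\,M$ makes the doubled map well defined and copy-preserving), the homotopy to the identity for sufficiently close maps on a closed ANR, the nonvanishing of the ($\mathbb{Z}/2$ if necessary) degree, and the descent via density of the manifold interior and closedness of $\tilde g(M)$ are all sound, and the doubling trick is indeed the right way to avoid the boundary-compatibility problem with a naive straight-line homotopy. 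The one caveat worth stating explicitly is that your argument proves only the compact case: you produce a uniform constant $\psi_0$, whereas the lemma as stated allows $\psi$ to be a positive continuous \emph{function} precisely so that it also covers non-compact manifolds (where no uniform constant exists and one must work with proper maps, as you note in passing). Since the paper applies the lemma only to the compact manifold $\overline{N}$ and itself immediately replaces $\psi$ by its positive minimum $\delta_0$, this restriction is harmless here, but a reader should be told that the general statement requires the locally-uniform refinement you sketched rather than the global modulus of continuity you actually used.
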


\begin{them}\label{稳定性}
	If $f:M\to M$ is pointwise hyperbolic on the invariant set $N$ and satisfies the assumptions U, S, R, then for any sufficiently small $\varepsilon>0$, $f$ is "semi - pointwise quasi - stable", that is, if the diffeomorphism $g:M\to M$ satisfies: \\
	(1)\quad$f|_{M\backslash N}=g|_{M\backslash N}$\\
	(2)\quad$\prod_{i=0}^\infty m(Df|_{E^u(g^ix)})=\infty
	,  \prod_{i=0}^\infty m(Df^{-1}|_{E^s(g^{-i}x)})=\infty  \quad \forall x\in N$\\
	(3)\quad$\max\{d(f(x),g(x)),\| D_xf-D_xg\|\}<\xi(x)\delta^u(x)Q(x)^2Q(f(x)) \quad \forall x\in N$
	\\ then there exists a continuous surjective map $h:M\to M$ such that $h\circ g = f\circ h$. In particular, $h|_{M\setminus N}=\text{id}$ and $\forall x\in N, d(h(x),x)\leq Q(x)$.
	
	Furthermore, when $f$ also satisfies assumption K, if there exists a sequence of functions $\{r_g^n(x)\}_{n\in\mathbb{Z}}$ such that "for all $n$, when $d(g^nx, g^ny)\leq r_g^n(x)$, then $x = y$," then $h$ is a homeomorphism. In this case, $f$ is said to be "pointwise quasi - stable".
\end{them}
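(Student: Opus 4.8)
The plan is to adapt Walters' proof of topological stability to the pointwise setting, using the pointwise shadowing lemma and pointwise expansivity already established. Since $f|_{M\setminus N}=g|_{M\setminus N}$ and $f,g$ are diffeomorphisms with $f(N)=N$, also $g(N)=N$, so the $g$-orbit of any point lies entirely in $N$ or entirely in $M\setminus N$. Define $h(x)=x$ for $x\in M\setminus N$, and for $x\in N$ let $h(x)\in N$ be the point shadowing the $g$-orbit $\{g^nx\}_{n\in\mathbb Z}$. To see that this is well defined one first checks that $\{g^nx\}_{n\in\mathbb Z}$ is an $\varepsilon$-pointwise pseudo orbit of $f$ in the sense of Definition~\ref{pointwise_pseudo_orbit}: hypothesis~(3) of the theorem (with $\xi$ chosen $\le 1$, and small enough that the passage from the $C^1$-closeness of $f,g$ to the $C^0$-closeness of $f^{-1},g^{-1}$ is controlled) yields both inequalities in part~(1), while hypothesis~(2), via the identities relating $\|Df|_{E^s(\cdot)}\|$ to $m(Df^{-1}|_{E^s(\cdot)})$ and $\|Df^{-1}|_{E^u(\cdot)}\|$ to $m(Df|_{E^u(\cdot)})$ along orbits together with continuity of the splitting, yields part~(2). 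Lemma~\ref{跟踪引理} then provides a unique shadowing point $h(x)$, and applying the lemma to the shifted pseudo orbits gives $d(f^nh(x),g^nx)\le\frac1{50}Q(g^nx)$ for every $n$.

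Next one verifies the properties of $h$. The semi-conjugacy $h\circ g=f\circ h$ holds on $M\setminus N$ because there $h=\mathrm{id}$ and $f=g$, and on $N$ because $f(h(x))$ shadows the shifted pseudo orbit $\{g^n(gx)\}_n$, so uniqueness of the shadowing point forces $f(h(x))=h(gx)$. By construction $h|_{M\setminus N}=\mathrm{id}$ and $d(h(x),x)\le\frac1{50}Q(x)\le Q(x)$ on $N$, and since $Q$ is bounded by a constant times $\varepsilon^{2/(\alpha-\delta)}$, we have $\sup_M d(h(x),x)\to0$ as $\varepsilon\to0$. For continuity: at interior points of $M\setminus N$ it is clear; at points of $\partial N$ it follows from $Q(z)\to0$ as $z\to\partial N$; and at $x\in N$, for $z\in N$ close to $x$ the finitely many iterates $g^nz$ with $|n|\le K(h(x),\lambda)$ stay close to $g^nx$, so the $\tfrac1{50}Q$-tightness of the shadowing together with Lemma~\ref{Q(x)接近} gives $d(f^nh(x),f^nh(z))\le Q(f^nh(x))$ for $|n|\le K(h(x),\lambda)$, whence Lemma~\ref{证h连续} yields $d(h(x),h(z))<\lambda$. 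Finally $h$ agrees with $\mathrm{id}$ off $N$ (in particular on $\mathrm{Bd}(M)$) and is uniformly close to $\mathrm{id}$, so Lemma~\ref{满} gives surjectivity. This establishes that $f$ is semi-pointwise quasi-stable.

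For the homeomorphism statement, assume in addition Assumption~K and that $g$ is pointwise expansive with functions $\{r_g^n\}$. By Theorem~\ref{保持逐点双曲}, $g$ is pointwise hyperbolic on $N$; moreover the estimate $\|D_xgv\|_\triangle\ge\sqrt{m(Df|_{E^u(x)})}\,\|v\|_\triangle$ for $v$ in the unstable cone (from the proof of Theorem~\ref{保持逐点双曲}) together with $d(g(x),\partial N)\approx d(f(x),\partial N)$ shows that $g$ inherits Assumptions~U, S, R, so the graph transform, Proposition~\ref{稳定流形} and Lemma~\ref{跟踪引理} all apply to $g$. Carrying out the construction above with the roles of $f$ and $g$ exchanged produces a continuous surjection $h':M\to M$ with $h'\circ f=g\circ h'$, $h'|_{M\setminus N}=\mathrm{id}$, and $h'$ uniformly close to $\mathrm{id}$. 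Then $h\circ h'$ commutes with $f$, equals $\mathrm{id}$ off $N$, and satisfies $d((h\circ h')(y),y)<Q(y)$ for all $y$ (by Lemma~\ref{Q(x)接近}), so pointwise expansivity of $f$ (Theorem~\ref{逐点可扩}, with the expansivity functions $r_n(x)=Q(f^nx)$) forces $h\circ h'=\mathrm{id}_M$; symmetrically $h'\circ h$ commutes with $g$, equals $\mathrm{id}$ off $N$, and stays suitably close to $\mathrm{id}$, and the pointwise expansivity of $g$ (given by the assumed $\{r_g^n\}$, whose effective scale is comparable to the $g$-analogue of $Q(g^n\cdot)$ since $g$ is pointwise hyperbolic, or directly by Theorem~\ref{逐点可扩} applied to $g$) forces $h'\circ h=\mathrm{id}_M$. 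Hence $h$ is a continuous bijection of the compact Hausdorff manifold $M$, and therefore a homeomorphism.

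The step I expect to be the main obstacle is controlling everything uniformly near $\partial N$, where hyperbolicity degenerates. One must pick $\xi$ and $\varepsilon$ so that the right-hand side $\xi(x)\delta^u(x)Q(x)^2Q(f(x))$ of hypothesis~(3), which tends to $0$ as $x\to\partial N$, still dominates all the error terms arising in the verification that $\{g^nx\}$ is an $\varepsilon$-pointwise pseudo orbit of $f$ --- in particular when transferring hypothesis~(2), stated with base points $g^ix$, to the vanishing products of Definition~\ref{pointwise_pseudo_orbit}(2), whose base points are shifted by one $f$-step, and when bounding $d(f^{-1}(g^{n+1}x),g^nx)$. The same scale-sensitivity governs the bound $d(h(x),x)\le\frac1{50}Q(x)$, which must degenerate near $\partial N$ (for continuity there) while staying uniformly small on $M$ (for surjectivity), and it governs the inheritance of the full pointwise-hyperbolic structure of $f$ by $g$ in the homeomorphism part. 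Once these scale-dependent smallness estimates are secured, the rest is the familiar Walters bookkeeping: semi-conjugacy and injectivity from uniqueness of shadowing, continuity from finite-time expansivity, and surjectivity from closeness to the identity.
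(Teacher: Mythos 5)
Your construction of the semiconjugacy $h$ --- verifying that $\{g^nx\}_{n\in\mathbb Z}$ is an $\varepsilon$-pointwise pseudo orbit of $f$, defining $h(x)$ as the unique shadowing point from Lemma \ref{跟踪引理}, deducing $h\circ g=f\circ h$ from uniqueness of shadowing, continuity from Lemmas \ref{Q(x)接近} and \ref{证h连续}, and surjectivity from Lemma \ref{满} after extending $h$ by the identity across $\partial N$ --- is essentially the paper's argument for semi-pointwise quasi-stability, including the point you flag about transferring hypothesis (2) to the products in Definition \ref{pointwise_pseudo_orbit}(2) via continuity of the splitting.

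The homeomorphism part is where you diverge, and where there is a genuine gap. You propose the classical two-sided Walters scheme: build a reverse semiconjugacy $h'$ with $h'\circ f=g\circ h'$ by running the shadowing construction for $g$, then cancel $h\circ h'$ and $h'\circ h$ by expansivity. But the shadowing lemma for $g$ is not available: Theorem \ref{保持逐点双曲} only yields that $g$ is pointwise hyperbolic (an invariant splitting $G^u\oplus G^s$ with pointwise rates), not that $g$ satisfies the quantitative boundary Assumptions U(ii) and S(ii) for its own splitting, with constants $C^u,C^s>1$ and exponents compatible with Assumption R. Those assumptions are exactly what drive Proposition \ref{相差估计}, the graph transform, Proposition \ref{稳定流形} and Lemma \ref{跟踪引理}; your one-line claim that the cone estimates show ``$g$ inherits Assumptions U, S, R'' is unjustified and is precisely the hard point near $\partial N$. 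The paper avoids this entirely: it takes the expansivity of $g$ (the functions $r_g^n$) as a hypothesis and proves injectivity of $h$ directly by contradiction --- if $h(x_0)=h(y_0)$ with $x_0\neq y_0$, then $d(g^{n}x_0,g^{n}y_0)\le d(hg^{n}x_0,g^{n}x_0)+d(hg^{n}y_0,g^{n}y_0)\le\frac1{25}\varepsilon^{2/(\alpha-\delta)}r_0^{\gamma}$ for all $n$, which for small $\varepsilon$ contradicts $d(g^{n_0}x_0,g^{n_0}y_0)>r_g^{n_0}(x_0)$ --- and then concludes that a continuous bijection of the compact manifold $M$ is a homeomorphism. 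Relatedly, your cancellation $h'\circ h=\mathrm{id}$ leans on the assumed $r_g^n$ being ``comparable to the $g$-analogue of $Q(g^n\cdot)$,'' which is not part of the hypothesis. To salvage your route you would have to actually prove that $g$ satisfies U, S, R; otherwise replace the construction of $h'$ by the direct injectivity argument.
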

\begin{proof}
	
	\textbf{Step 1:} First, we prove that there exists a map $h$ on $N$ such that $h\circ g = f\circ h$ and $d(h(x),x)\leq Q(x)$.
	
	According to the definition of pointwise pseudo orbit, for $\forall x\in N$, $\{g^n(x)\}_{n\in\mathbb{Z}}$ is a pointwise pseudo orbit of $f$. In fact, from (3), for sufficiently small  $\xi(x)$, when 
	$$\max\{d(f(x),g(x)),\| D_xf - D_xg\|\}<\xi(x)\delta^u(x)Q(x)^2Q(f(x))$$
	we have
	$$\max\{d(f^{-1}(x),g^{-1}(x)),\| D_xf^{-1}-D_xg^{-1}\|\}<\delta^s(x)Q(x)^2Q(f^{-1}(x)),$$
	then
	$$d(f(g^nx),g^{n + 1}x)<\delta^u(g^nx)Q(g^nx)^2Q(fg^nx)$$
	$$d(f^{-1}(g^{n+1}x),g^{n}x)<\delta^s(g^{n+1}x)Q(g^{n+1}x)^2Q(f^{-1}g^{n+1}x).$$
	From (2), we know that 
	$$\prod_{i=0}^\infty\|Df^{-1}|_{E^u(fg^i(x))}\|=0,\quad \prod_{i=-\infty}^0\|Df|_{E^s(f^{-1}g^{i}(x))}\|=0\quad\forall x\in N.$$
	Since $E^s$ and $E^u$ are continuous, if $\xi$ is sufficiently small, then we have
	$$\left\|Df^{-1}\big|_{E^u(g^{i + 1}(x))}\right\|\leq\sqrt{\left\|Df^{-1}\big|_{E^u(fg^i(x))}\right\|}, \quad\left\|Df\big|_{E^s(g^{i - 1}(x))}\right\|\leq\sqrt{\left\|Df\big|_{E^s(f^{-1}g^{i}(x))}\right\|}$$
	Furthermore, we have:	$$\prod_{i=0}^\infty\|Df^{-1}|_{E^u(g^i(x))}\|=0,\quad \prod_{i=-\infty}^0\|Df|_{E^s(g^{i}(x))}\|=0\quad\forall x\in N.$$
	According to the definition of the pointwise pseudo orbit, $\{g^n(x)\}_{n\in\mathbb{Z}}$ is a pointwise pseudo orbit of $f$.

	According to the shadowing Lemma \ref{跟踪引理}, for $\forall x\in N$, there exists a unique point $h(x)\in N$ such that the sequence $\{g^n(x)\}_{n\in\mathbb{Z}}$ is shadowed by $h(x)$, that is, for $\forall n \in\mathbb{Z}$,
	\[f^nh(x)\in \exp_{g^nx}(S_{g^nx}).\]
	This determines a map $h:N\to N$. Since for all $n \in\mathbb{Z}$,
	\[f^{n + 1}h(x)\in \exp_{g^{n+1}x}(S_{g^{n+1}x})\quad\text{and}\quad f^nh(g(x))\in \exp_{g^{n+1}x}(S_{g^{n+1}x}),\]
	 then both $fh(x)$ and $hg(x)$ shadow the pointwise pseudo orbit $\{g^{n+1}(x)\}_{n\in\mathbb{Z}}$.
	By the uniqueness of shadowing, we have $h\circ g = f\circ h$. According to Lemma \ref{跟踪引理}, since $h(x)$ shadows $\{g^n(x)\}_{n\in\mathbb{Z}}$, we have
	\[d(h(x),x)\leq\frac{1}{50}Q(x)<Q(x).\]\\
	\textbf{Step 2:} We will prove that $h:N\to N$ is continuous.
	For all $x\in N$ and all $\lambda > 0$, by Lemma \ref{证h连续}, $\exists K:= K(h(x),\lambda)$ such that for all $z\in N$, then
	\begin{equation}\label{稳定性1}
		d(f^nh(x),f^nz)\leq Q(f^nh(x))~~\forall |n|\leq K\Rightarrow d(h(x),z)<\lambda.
	\end{equation}
	Since $g$ is continuous, there exists a $\delta$ such that when $y$ satisfies $d(x,y)<\delta$, 
	\begin{equation}\label{稳定性2}
		d(g^nx,g^ny)\leq \frac{1}{25}Q(g^nx)~~\forall |n|\leq K
	\end{equation}
	by Lemma\Ref{Q(x)接近}, we have
	\begin{equation}\label{稳定性3}
		Q(g^ny)\leq2Q(g^nx).
	\end{equation}
	Moreover, since $h(x)$ shadows $\{g^n x\}_{n\in\mathbb{Z}}$, we have $d(hg^n x, g^n x)\leq Q(g^n x)$. According to Lemma \Ref{Q(x)接近}, we also have
	\begin{equation}\label{稳定性4}
		Q(hg^nx)\leq2Q(g^nx).
	\end{equation}
	Then for all $|n|\leq K ,$ by (\ref{稳定性2}),(\ref{稳定性3}),(\ref{稳定性4}),We can obtain that
	\begin{equation*}
		\begin{aligned}
			d(f^nhx,f^nhy)&=d(hg^nx,hg^ny)\\
			&\leq{}d(hg^nx,g^nx)+d(hg^ny,g^ny)+d(g^nx,g^ny) \\
			&\leq\frac{1}{50}(Q(g^nx)+Q(g^ny))+\frac{1}{25}Q(g^nx)\\
			&\leq\frac{3}{50}Q(g^nx)+\frac{1}{25}Q(g^nx)\\
			&\leq\frac{1}{10}Q(g^nx)\\
			&\leq Q(hg^nx)=Q(f^nhx).   
		\end{aligned}
	\end{equation*}
	Therefore, according to (\ref{稳定性1}), $d(h(x), h(y)) < \lambda$. Thus, $h$ is continuous at $x$. By the arbitrariness of $x$, we can conclude that $h:N\to N$ is continuous.\\
	\textbf{Step 3:} We will prove that $h:N\to N$ is surjective.
	In the previous discussion, we have shown that $h:N\to N$ is a continuous map, and for all $x\in N$, \[d(h(x),x)\leq Q(x)=\varepsilon^\frac{2}{\alpha-\delta}\min\{r_0^\gamma,d(x,\partial N)^\gamma\}\]
	According to the definition of $Q(x)$, we naturally extend $Q(x)$ to $\partial N$. That is, for all $x\in\partial N$, $Q(x) = 0$.
	Now, we continuously extend $h(x)$ to $\partial N$. For all $x\in\partial N$, we take $\{x_n\}\subset N$ such that $x_n\to x$, and define $h(x):=\lim_{n\to\infty}h(x_n)$.
	Notice that as $x_n\to x\in\partial N$, from the inequality $d(h(x_n),x_n)\leq Q(x_n)$, taking the limit on both sides with respect to $n$, we get $d(h(x),x) = 0$. That is, $h(x)=x$ for all $x\in\partial N$.
	Therefore, $h(\partial N)=\partial N$.

	According to Lemma \ref{满}, the identity map $\text{id}:\overline{N}\to\overline{N}$ is a homeomorphism. Thus, there exists a positive continuous function $\psi(x)$ on $\overline{N}$ such that if $h$ satisfies $d(h(x),\text{id}(x))<\psi(x)$ and $h(\text{Bd}(\overline{N}))\subset\text{Bd}(\overline{N})$, then $h:\overline{N}\to\overline{N}$ is surjective.
	Since $h(\partial N)=\partial N$, it is obvious that $h(\text{Bd}(\overline{N}))\subset\text{Bd}(\overline{N})$. Because $\overline{N}$ is compact and $\psi(x)$ is a positive continuous function on $\overline{N}$, there exists $\delta_0 > 0$ such that for all $x\in\overline{N}$, $\delta_0\leq\psi(x)$.
	Notice that$$d(h(x),x)\leq Q(x)\leq\varepsilon^\frac{2}{\alpha-\delta}r_0^\gamma,$$ so when $\varepsilon$ is small enough, we have $$d(h(x),x)\leq Q(x)\leq\varepsilon^\frac{2}{\alpha-\delta}r_0^\gamma<\delta_0\leq \psi(x).$$
	Therefore, $h:\overline{N}\to\overline{N}$ is surjective. Since $h(\partial N)=\partial N$, then $h:N\to N$ is surjective.
	
	Note that we can further continuously extend $h$ to the outside of $N$. Let $h|_{M\setminus N}=\text{id}$ and $Q|_{M\setminus N}=0$, we still have $h\circ g = f\circ h$ and $d(h(x),x)\leq Q(x)$.
	So far, there exists a continuous surjective mapping $h:M\to M$ that satisfies the conditions. In this case, $f$ is "semi - pointwise quasi - stable".\\
	\textbf{Step 4:} We will prove that when $f$ satisfies Assumption K, if there exists a sequence of functions $\{r_g^n(x)\}_{n\in\mathbb{Z}}$ such that "for all $n$, when $d(g^nx,g^ny)\leq r_g^n(x)$, then $x = y$", then $h$ is a homeomorphism.
	
	According to Theorem \ref{逐点可扩}, we know that $f$ is pointwise expansive. That is, for all $x\in N $, if $y\in N$ satisfies $d(f^nx,f^ny)\leq Q(f^nx)$ for all $n \in\mathbb{Z}$, then $x = y$.
	According to Lemma \ref{保持逐点双曲}, if $f$ satisfies assumption K, then the perturbation $g$ is also pointwise hyperbolic. Due to the preservation of pointwise hyperbolicity under perturbation, $g$ also implies a certain expansivity property. At this time, if there exists a sequence of functions $\{r_g^n(x)\}$ such that
	\begin{equation}\label{单射1}
		d(g^nx,g^ny)\leq r_g^n(x)\quad \forall n \Rightarrow x=y.
	\end{equation}
then we can prove that $h:N\to N$ is injective.
	
	In fact, we prove it by contradiction. Suppose that $h:N\to N$ is not injective. That is, there exist $x_0,y_0\in N$ with $x_0\neq y_0$, but $h(x_0) = h(y_0)$.
	According to (\ref{单射1}), for $x_0\neq y_0$, there exists an $n_0$ such that
	\begin{equation}\label{单射2}
		d(g^{n_0}x_0,g^{n_0}y_0)> r_g^{n_0}(x_0).
	\end{equation}
	On the other hand, since $h(x_0)=h(y_0) $, then
	\begin{equation*}
		\begin{aligned}
			d(g^{n_0}x_0,g^{n_0}y_0)&\leq d(hg^{n_0}x_0,g^{n_0}x_0)+d(hg^{n_0}y_0,g^{n_0}y_0)+d(hg^{n_0}x,hg^{n_0}y_0)\\
			&\leq d(hg^{n_0}x_0,g^{n_0}x_0)+d(hg^{n_0}y,g^{n_0}y_0)+d(f^{n_0}hx_0,f^{n_0}hy_0)\\
			&=d(hg^{n_0}x_0,g^{n_0}x_0)+d(hg^{n_0}y_0,g^{n_0}y_0)\\
			&\leq\frac{1}{50}(Q(g^{n_0}x_0)+Q(g^{n_0}y_0))\\
			&\leq\frac{1}{25}\varepsilon^\frac{2}{\alpha-\delta}r_0^\gamma
		\end{aligned}
	\end{equation*}
	When $\varepsilon > 0$ is small enough, we have
	$$\frac{1}{25}\varepsilon^{\frac{2}{\alpha - \delta}}r_0^{\gamma}< r_g^{n_0}(x_0),$$
	that is, $d(g^{n_0}x_0,g^{n_0}y_0)<r_g^{n_0}(x_0)$, which contradicts (\ref{单射2}).
	Therefore, $h:N\to N$ is injective.
	
	At this point, after extending the map $h$, $h: M \to M$ is a homeomorphism. In this case, we say that $f$ is "pointwise quasi - stable". 
\end{proof}

\newpage
\addcontentsline{toc}{section}{References}

\begin{thebibliography}{99} 
	\bibitem{Wun}L. Wen. Differential Dynamical Systems [M]. Beijing: Higher Education Press, 2015.
	\bibitem{Sun} W. Sun.  Ergodic Theory of Differentiation [M]. Beijing: Peking University Press, 2022.
	\bibitem{hyperbolic1} Z. Zhang.  Principles of Differential Dynamical Systems[M]. Beijing: Science Press, 1987.
	\bibitem{hyperbolic2} M. Brin , G. Stuck. Introduction to dynamical systems[M]. Cambridge: Cambridge University press, 2015.
	\bibitem{hyperbolic3}A. Katok, B. Hasselblatt. Introduction to the modern theory of dynamical systems[M]. Cambridge: Cambridge University press, 1997.
	\bibitem{Katok} A. Katok.  Bernouli diﬀeomorphisms on surfaces[J]. Ann. of Math., 1979, 110(3): 529-547.
	\bibitem{Almost}H. Hu, L-S. Young.  Nonexistence of SBR measures for some diﬀeomorphisms that are “almost Anosov” [J]. Ergodic Theory Dynamic Systems, 1995, 15: 67-76.
	\bibitem{SRB} J. Chen ,H. Hu, Y. Zhou.  SRB measures for pointwise hyperbolic systems on open regions[J]. Science China Mathematics, 2020, 63(9): 1671-1720.
	\bibitem{Wu} C. Wu, Y. Zhou. Symbolic dynamics for pointwise hyperbolic systems on open regions[J]. Ergodic Theory and Dynamical Systems, 2024, 45(2): 1-37.
	\bibitem{Anosov} D. Anosov. Geodesic flows on closed Riemannian manifolds of negative curvature[M]. Providence, RI: American Mathematical Society, 1967.
	\bibitem{Peter2} P. Walters.  Anosov diffeomorphisms are topologically stable[J]. Topology, 1970,9:71-78
	\bibitem{Peter} P. Walters.  On the pseudo-orbit tracing and its relationship to stability[J]. Lecture Notes in Math. 1978: 231-244.
	\bibitem{stability1} S. Gan.  Another proof for $C^1$ stability conjecture for flows[J]. Science in China Series A: Mathematics, 1998, 41(10): 1076-1082.
	\bibitem{stability2} S. Hayashi. Connecting invariant manifolds and the solution of the $C^1$ stability conjecture and stability conjecture for flows[J]. Ann. of Math., 1997,145: 81-137.
	\bibitem{stability3} S. Liao. On the stability conjecture[J]. Chinese Ann. Math., 1980,1: 9-30.
	\bibitem{stability4} J. Palis , S. Smale. Structural stability theorems[J]. Proc.Symp. Pure Math., Amer. Math. Soc., 1970, 14: 223-232.
	\bibitem{stability5} J. Palis.  On the $C^1~\Omega-$stability conjecture[J]. Publ. Math. IHES., 1988, 66: 211-215.
	\bibitem{stability6} R. Mane.  A proof of the $C^1$ stability conjecture[J]. Publ. Math. IHES.,1988, 66: 161-210.
	\bibitem{stability7} S. Smale. The $\Omega-$ stability theorem[M]. Providence, RI: Amer. Math. Soc., 1970.
	\bibitem{stability8} L. Wen. On the $C^1$ stability conjecture for flows[J]. J. Differential Equations, 1996, 129: 334-357.
	\bibitem{gz1}K. Palmer. Shadowing in dynamical systems,theory and applications[M]. Dordrecht: Kluwer Academic Publishers, 2000.
	\bibitem{gz2} S. Pilyugin. Shadowing in dynamical systems[M]. New York: Springer-Verlag, 2 006.
	\bibitem {Mun} J. Munkres.  Elementary Differential Topology[M]. Princeton: Princeton University Press, 1966.
\end{thebibliography}

\end{document}